\documentclass[preprint]{imsart}

\RequirePackage[OT1]{fontenc}
\RequirePackage{pdfsync}
\RequirePackage{amsfonts}
\RequirePackage[numbers]{natbib}
\RequirePackage[colorlinks,citecolor=blue,urlcolor=blue]{hyperref}
\RequirePackage[centertags, leqno, sumlimits, nointlimits, namelimits]{mathtools}
\RequirePackage[thmmarks,thref,amsthm,amsmath]{ntheorem}
\RequirePackage{paralist}
\RequirePackage[english, noabbrev]{cleveref}

\startlocaldefs
\numberwithin{equation}{section}
\theoremstyle{plain}
\newtheorem{theorem}{Theorem}[section]
\newtheorem{lemma}[theorem]{Lemma}

\theoremstyle{remark}
\newtheorem{remark}[theorem]{Remark}

\newtheorem{assumption}[theorem]{Assumption}

\endlocaldefs

\def\N{{ \mathbb{N} }} 
\def\R{{ \mathbb{R} }} 
\def\P{{ \text{pr} }} 
\def\E{{ E }}		

\usepackage{multirow,color}

\DeclareMathOperator{\Exp}{Exp}
\begin{document}

\begin{frontmatter}
\title{More powerful logrank permutation tests for two-sample survival data}
\runtitle{Logrank permutation tests}

\begin{aug}
\author{\fnms{Marc} \snm{Ditzhaus}\ead[label=e1]{marc.ditzhaus@hhu.de}}
and 
\author{\fnms{Sarah} \snm{Friedrich}\ead[label=e2]{sarah.friedrich@alumni.uni-ulm.de}}


\runauthor{M. Ditzhaus and S. Friedrich}

\affiliation{Ulm University \thanksmark{m1}}

\address{Address of the authors\\
 Ulm University \\
 Institute of Statistics\\
  Helmholtzstr. 20\\
   89081 Ulm, Germany\\
\printead{e1}\\
\printead{e2}}

\end{aug}

\begin{abstract}
	Weighted logrank tests are a popular tool for analyzing right censored survival data from two independent samples. Each of these tests is optimal against a certain hazard alternative, for example the classical logrank test for proportional hazards. But which weight function should be used in practical applications? We address this question by a flexible combination idea leading to a testing procedure with broader power. Beside the test's asymptotic exactness and consistency its power behaviour under local alternatives is derived. All theoretical properties can be transferred to a permutation version of the test, which is even finitely exact under exchangeability and showed a better finite sample performance in our simulation study. The procedure is illustrated in a real data example.
\end{abstract}

\begin{keyword}
	\kwd{Right censoring}
	\kwd{weighted logrank test}
	\kwd{local alternatives} 
	\kwd{two-sample survival model}
\end{keyword}

\end{frontmatter}

\section{Introduction}\label{sec:intro}
Deciding whether there is a difference between two treatments is only one example for the variety of two-sample problems. Within the right censoring survival set-up the classical logrank test, first proposed by \citet{Mantel1966} and \citet{PetoPeto1972}, is very popular in practice. It is well known that the logrank test is optimal for proportional hazard alternatives but may  lead to wrong decisions when the relationship of the hazards is time-dependent. Adding a weight function we obtain optimal tests for other kinds of alternatives. These so-called weighted logrank tests are well studied in the literature, see \citet{ABGK}, \citet{BagonaviciusETAL2010}, \citet{FlemingHarrington}, \citet{HarringtonFleming1982}, \citet{Gill_1980}, \citet{KleinMoeschberger1997}, \citet{TaroneWare1977}. However, no weighted logrank test is a so-called omnibus test, i.e., a consistent test for all alternatives. Depending on the pre-chosen weight the corresponding logrank test is consistent for specific alternatives, details can be found in Section \ref{sec:set-up}. This is in line with the result of \citet{Janssen2000} that any test has only reasonable power for a finite dimensional subspace of the nonparametric two-sample alternative. A lot of effort was made to obtain tests with a good performance for a huge class of alternatives. \citet{FlemingETAL1987} suggested a supremum version of the logrank test with the purpose of power robustification. The funnel test of \citet{Ehm1995} had the same aim, to loose a little power for some alternatives and to gain a substantial power amount for other alternatives in reverse. \citet{LaiYing1991} proposed to estimate the weight function. Since they use kernel estimators a great amount of data is needed for a suitable performance and, hence, it is not usable for various applications. Adaptive weights were discussed by \citet{YangETAL2005} and \citet{YangPrentice2010}.  \citet{JonesCrowley1989,JonesCrowley1990} generalized many previous tests to a huge class of nonparametric single-covariate tests. Several researchers followed the idea to combine different weighted logrank tests. For instance,   \citet{Bajorski1992}, \citet{Tarone1981} and \citet{GaresETAL2015} took the maximum. \citet{BathkeETAL2009} considered the censored empirical likelihood with constraints corresponding to different weights. The supremum of function-indexed weighted logrank tests was studied by \citet{KosorokLin1999}. 

Finally, we like to focus on the paper of  \citet{BrendelETAL2014}, which motivated the present paper. Adapting the concept of broader power functions by \citet{BehnenNeuhaus1983,BehnenNeuhaus1989} to the right-censored survival set-up, they first choose a vector of weighted logrank statistics. Roughly speaking, this vector is then adaptively projected onto a space corresponding to the closed hazard alternative. In this way they ensure asymptotic optimality against the given alternatives of interest. A permutation version of their test solves the problem of the test statistic's unknown limit distribution. While their procedure is theoretically optimal (in some sense), it has the following disadvantages, which may explain why the method is not used in practice: 1. Due to the projection terminology the paper is quite hard to read and to understand. 2. Their permutation approach is computationally very expensive and time consuming. 3. Their method is not implemented in some common statistical software. In this paper we present a solution for all these points. 1. We only use the typical survival notation and our statistic is a simple quadratic form. 2. We explain how to appropriately choose the weights for the logrank statistic such that the asymptotic results are not affected but the corresponding permutation test becomes far more computationally effective. 3. Our novel method is implemented in an R package called \textit{mdir.logrank}, which is available on CRAN soon, and is very easy to use as illustrated in Section \ref{sec:real_data} by discussing a real data example. A simulation study promises a good finite sample performance of our permutation test under the null and a good power behaviour under various alternatives.

\section{Two-sample survival set-up}\label{sec:set-up}
We consider the standard two-sample survival set-up given by survival times $T_{j,i}\sim F_j$ and censoring times $C_{j,i}\sim G_{j}$ $(j=1,2;\,i=1,\ldots,n_j)$ with continuous distribution functions $F_j,G_j$ on the positive line. As usual, all random variables $T_{1,1},C_{1,1},\ldots,T_{2,n_2},C_{2,n_2}$ are assumed to be independent. Let $n=n_1+n_2$ be the pooled sample size, which is supposed to go to infinity in our asymptotic consideration. All limits $\to$ are meant as $n\to\infty$ if not stated otherwise. We are interested in the survival times' distributions $F_1,F_2$, but only the possibly censored survival times $X_{j,i}=\min(T_{j,i},C_{j,i})$ and their censoring status $\delta_{j,i}=\mathbf{1}\{X_{j,i}=T_{j,i}\}$ $(j=1,2;\,i=1,\ldots,n_j)$ are observable.

Throughout, we adopt the counting process notation of \citet{ABGK}. Let $N_{j,i}(t)=\mathbf{1}\{X_{j,i}\leq t,\,\delta_{j,i}=1\}$ and $Y_{j,i}(t)=\mathbf{1}\{X_{j,i}\geq t\}$ $(t\geq 0)$. Then $N_j(t)=\sum_{i=1}^{n_j}N_{j,i}(t)$ counts the number of 
events in group $j$ up to $t$ and $Y_j(t)=\sum_{i=1}^{n_j}Y_{j,i}(t)$ equals the number of individuals in group $j$ at risk at time $t$. Analogously, the pooled versions $N=N_1+N_2$ and $Y=Y_1+Y_2$ can be interpreted. Using these processes we can introduce the famous Kaplan--Meier and Nelson--Aalen estimators. \citet{ABGK} proved that both estimators obey a central limit theorem, or, in other words, they are asymptotically normal. The Nelson--Aalen estimator $\widehat A_j$ given by
\begin{align*}
\widehat A_j(t)=\int_{[0, t]} \frac{\mathbf{1}\{Y_j>0\}}{Y_j}\,\mathrm{ d }N_j\quad (t\geq 0;\ j=1,2)
\end{align*}
is the canonical nonparametric estimator of the (group specific) cumulative hazard function $A_j(t)=-\log(1-F_j(t))=\int_0^t(1-F_j)^{-1}\,\mathrm{ d }F_j$. Similarly, for the pooled sample we introduce $\widehat A(t)=\int_0^t \mathbf{1}\{Y>0\}/Y \,\mathrm{ d }N$ $(t\geq 0)$. In the following, we need the Kaplan--Meier estimator $\widehat F$ (only) for the pooled sample. It is 
\begin{align*}
1-\widehat F(t)=\prod_{(j,i):X_{j,i}\leq t}\Bigl( 1- \frac{\delta_{j,i}}{Y(X_{j,i})} \Bigr)=\prod_{(j,i):X_{j,i}\leq t}\Bigl( 1- \frac{\Delta N(X_{j,i})}{Y(X_{j,i})} \Bigr)\quad (t\geq 0),
\end{align*}
where $\Delta f(t)=f(t)-f(t-)$ denotes the jump height in $t$ for $f:\R \to \R$. 

In the subsequent sections we study the two-sample testing problem
\begin{align}\label{eqn:testproblem}
H_{=}: F_1=F_2\text{ versus }K_{\neq} : F_1\neq F_2.
\end{align}
Weighted logrank tests are well known and often applied in practice for this testing problem. An introduction to these tests in their general form can be found in the books of \citet{ABGK} and \citet{FlemingHarrington}. First, choose a weight function $w\in \mathcal W =\{w:[0,1]\to\R\text{ continuous and of bounded variation}\}$.  Then the corresponding weighted logrank statistic is 
\begin{align*}
T_n(w)= \Bigr(\frac{n}{n_1n_2}\Bigl)^{1/2} \int_{[0, \infty)} w(\widehat F(t-)) \frac{Y_1(t)Y_2(t)}{Y(t)}\Bigl[ \,\mathrm{ d }\widehat A_1(t) -\,\mathrm{ d }\widehat A_2(t)\Bigr].
\end{align*}
By \citet{Gill_1980} $T_n(w)$ is asymptotically normal and its asymptotic variance can be estimated by
\begin{align}\label{eqn:sigma_Gill}
\widehat\sigma_n^2 (w) = \frac{n}{n_1n_2} \int_{[0, \infty)} w(\widehat F(t-))^2 \frac{Y_1(t)Y_2(t)}{Y(t)} \,\mathrm{ d }\widehat A(t).
\end{align}
Tests based on $T_n(w)$ or studentized versions based on $T_n(w)/\widehat\sigma_n(w)$ are not omnibus tests for \eqref{eqn:testproblem}. But they have good properties for specific semiparametric hazard alternatives depending on the pre-chosen weight function $w$. Among others, $T_n(w)$ is consistent for alternatives of the form
\begin{align}\label{eqn:alternative}
K_w: A_2(t)=\int_{[0,t]} 1+ \vartheta w\circ F_1 \,\mathrm{ d } A_1, 
\end{align}
where we consider all $\vartheta\neq 0$ leading to a non-negative integrand $1+\vartheta w\circ F_1$ over the whole line. 
For example, the classical logrank test with weight $w\equiv 1$ is consistent against the proportional hazard alternative $K_{\text{prop}}: A_2(t)=(1+\vartheta)A_1(t)$, $0\neq \vartheta\in(-1,\infty)$, and even optimal for so-called local alternatives $K_{\text{loc}}:A_2(t)=(1+n^{-1/2}\vartheta)A_1(t)$, see \citet{Gill_1980}. Choosing $w_{\text{prop}}\equiv 1$ we weight all time points equally. Instead of this, we can also give more weight to departures of the null $A_1=A_2$ at early times by setting $w_{\text{early}}(u)=u(1-u)^3$ or at central times, which are close to the median $F_1(1/2)$, by $w_{\text{cent}}(u)=u(1-u)$ $(u\in[0,1])$. All these are examples for stochastic ordered alternatives, i.e., we have $F_1\leq F_2$ or $F_1\geq F_2$, depending on the sign of $\vartheta$. Even the local increments $A_2(t,t+\varepsilon]$ are ordered since all $w$ are strictly positive. An example without the latter property is the crossing hazard weight $w_{\text{cross}}(u)=1-2u$ with a sign switch at $u=1/2$. Since $w_{\text{prop}}$ and $w_{\text{cross}}$ are orthogonal in $L^2(0,1)$, i.e., $\int_0^1w_{\text{prop}}w_{\text{cross}}(x)\,\mathrm{ d }x=0$, it is not surprising that the classical logrank test has no asymptotic power for the crossing hazard alternative $K_w$ with $w=w_{\text{cross}}$, and vice versa.  Our paper's aim is to combine the good properties of $T_n(w)$ for different weight functions $w$ to obtain a powerful test for various hazard alternatives simultaneously. 

\section{Our test and its asymptotic properties}\label{sec:uncond_test}
For the asymptotic set-up we need two (very common) assumptions. First, assume that no group size vanishes: $0< \liminf_{n\to\infty}{n_1}/{n} \leq \limsup_{n\to\infty}{n_1}/{n}<1$. Let $\tau = \inf\{u>0:[1-G_1(u)][1-G_2(u)][1-F_1(u)][1-F_2(u)]=0\}$, where the convention $\inf \emptyset =\infty$ is used. To observe not only censored data it is convenient to suppose $F_1(\tau)>0$ or $F_2(\tau)>0$ in the case of $\tau<\infty$.

The basic idea of our test is to first choose an arbitrary amount of hazard directions/weights $w_1,\ldots,w_m\in \mathcal W$ $(m\in\N)$ and to consider the vector $T_n=[T_n(w_1),\ldots,T_n(w_m)]^T$ of the corresponding weighted logrank tests. In the spirit of \eqref{eqn:sigma_Gill} let the empirical covariance matrix $\widehat\Sigma_n$ of $T_n$ be given by its entries
\begin{align*}
(\widehat\Sigma_n)_{r,s}=\frac{n}{n_1n_2} \int_{[0,\infty)} w_s(\widehat F(t-))w_r(\widehat F(t-)) \frac{Y_1(t)Y_2(t)}{Y(t)} \,\mathrm{ d }\widehat A(t)\quad (r,s=1,\ldots,m).
\end{align*}
The studentized version of the statistic $T_n$ is the quadratic form $S_n=T_n^T\widehat\Sigma_n^-T_n$, where $A^-$ denotes the Moore--Penrose inverse of the matrix $A$. We suggest to use $S_n$ for testing \eqref{eqn:testproblem}. For our asymptotic results we restrict our considerations to linear independent weights in the following sense:
\begin{assumption}\label{ass:linear_independent}
	Suppose for all $\varepsilon\in(0,1)$ that $w_1,\ldots,w_m$ are linearly independent on $[0,\varepsilon]$, i.e., $\sum_{i=1}^m \beta_iw_i(x) = 0$ for all $x\in[0,\varepsilon]$ implies $\beta_1=\cdots=\beta_m=0$.
\end{assumption}
Many typical hazard weights are polynomial, for example the ones we introduced in Section \ref{sec:intro}. For these weights the linear independence on $[0,1]$ is equivalent to the one on $[0,\varepsilon]$. Consequently, it is easy to check whether the pre-chosen weights fulfill Assumption 1.
\begin{theorem}[Convergence under the null]\label{theo:uncond_null}
	Let Assumption \ref{ass:linear_independent} be fulfilled. Then  $S_n$ converges in distribution to a $\chi_m^2$-distributed random variable.
\end{theorem}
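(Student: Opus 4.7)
The plan is to combine four ingredients: (i) joint asymptotic normality $T_n \to_d N(0,\Sigma)$ under $H_=$; (ii) consistency $\widehat\Sigma_n \to_p \Sigma$; (iii) positive definiteness of $\Sigma$ under Assumption \ref{ass:linear_independent}; and (iv) a continuous-mapping argument giving $S_n \to_d Z^T\Sigma^{-1}Z\sim\chi_m^2$ for $Z\sim N(0,\Sigma)$.

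For (i) I would invoke the standard martingale representation of each $T_n(w_r)$ (see \citet{Gill_1980} and Ch.~V of \citet{ABGK}). Under $H_=$ the counting-process martingales $M_j(t)=N_j(t)-\int_0^t Y_j\,dA$ are orthogonal, so the multivariate martingale CLT applies to the vector $T_n$ with limiting covariance
\[
\Sigma_{r,s} = \int_{[0,\tau)} w_r(F)\, w_s(F)\, \frac{y_1 y_2}{y_1+y_2}\, dA,
\]
where $y_j(t)$ is the in-probability limit of $n^{-1}Y_j(t)$ and $F=F_1=F_2$, $A=A_1=A_2$ under $H_=$. The Lindeberg condition is automatic because the weights are continuous and bounded and the martingale jumps are of order $n^{-1/2}$.

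The main obstacle will be (iii). For $\beta\in\R^m$,
\[
\beta^T \Sigma \beta = \int_{[0,\tau)} \Big(\sum_{i=1}^m \beta_i\, w_i(F(t))\Big)^2\, \frac{y_1(t)\, y_2(t)}{y_1(t)+y_2(t)}\, dA(t).
\]
The assumption $0<\liminf n_1/n\leq \limsup n_1/n<1$ makes $y_1(t)y_2(t)/(y_1(t)+y_2(t))>0$ on $[0,\tau)$, while $F(\tau)>0$ combined with continuity of $F$ and $F(0)=0$ guarantees a subinterval $[t_0,t_0+\eta]\subset[0,\tau)$ carrying positive $dA$-mass and mapped by $F$ onto $[0,\varepsilon]$ for some $\varepsilon\in(0,1)$. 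If $\beta^T\Sigma\beta=0$, continuity forces the integrand to vanish identically on $[t_0,t_0+\eta]$, so $\sum_i \beta_i w_i\equiv 0$ on the image $[0,\varepsilon]$; Assumption \ref{ass:linear_independent} then yields $\beta=0$, and $\Sigma$ is positive definite.

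For (ii), uniform consistency of $\widehat F$, $\widehat A$ and $n^{-1}Y_j$ on compact subsets of $[0,\tau)$ (Theorem IV.1.1 of \citet{ABGK}), together with continuity and bounded variation of the weights, gives $\widehat\Sigma_n\to_p\Sigma$ by a standard integration argument. Since $\Sigma$ is invertible by (iii), with probability tending to one $\widehat\Sigma_n$ is invertible and $\widehat\Sigma_n^-=\widehat\Sigma_n^{-1}\to_p\Sigma^{-1}$. Slutsky's lemma and the continuous-mapping theorem applied to the quadratic form $(v,M)\mapsto v^T M v$ then give $S_n=T_n^T\widehat\Sigma_n^- T_n\to_d Z^T\Sigma^{-1}Z$, and this is $\chi_m^2$ since $\Sigma^{-1/2}Z\sim N(0,I_m)$.
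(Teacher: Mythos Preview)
Your proposal is correct and follows the same path as the paper: joint asymptotic normality of $T_n$ and consistency of $\widehat\Sigma_n$ (which the paper simply cites from \citet{BrendelETAL2014} while you sketch the martingale CLT), full rank of $\Sigma$ via Assumption~\ref{ass:linear_independent}, and the continuous-mapping theorem. One small point you should tighten: under the standing assumption only $0<\liminf n_1/n\le\limsup n_1/n<1$ holds, so the in-probability limits $y_j$ and hence your matrix $\Sigma$ need not exist as written; the paper handles this by first passing to a subsequence with $n_1/n\to\eta\in(0,1)$ and then observing that the resulting $\chi_m^2$ limit does not depend on $\eta$, so the convergence holds along the full sequence.
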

Regarding Theorem \ref{theo:uncond_null} we define our test by $\phi_{n,\alpha}=\mathbf{1}\{S_n>\chi_{m,\alpha}^2\}$ $[\alpha\in(0,1)]$, where $\chi_{m,\alpha}^2$ is the $(1-\alpha)$-quantile of the $\chi_m^2$-distribution. Under Assumption \ref{ass:linear_independent} $\phi_{n,\alpha}$ is asymptotically exact, i.e., $\E_{H_{=}}(\phi_{n,\alpha})\to \alpha$. We want to point out that Assumption \ref{ass:linear_independent} is not needed to obtain distributional convergence under the null, see \citet{BrendelETAL2014}.  But the degree of freedom $k$ of the limiting $\chi_k^2$-distribution depends in general on the unknown asymptotic set-up and may be less than $m$ if Assumption \ref{ass:linear_independent} does not hold. For this case \citet{BrendelETAL2014} suggested to estimate $k$ by its consistent estimator $ \kappa=\text{rank}(\widehat{\Sigma}_n)$ and use the data depended critical value $\widehat c_\alpha=\chi_{\kappa,\alpha}^2$. 

Theorem \ref{theo:uncond_null} implies that the classical statistic $T_n(w)/\widehat\sigma_n(w)$ converges in distribution to a $\chi_1^2$-distributed random variable. The weighted logrank test $\phi_{n,\alpha}(\widetilde w)=\mathbf{1}\{T_n(\widetilde w)/\widehat\sigma_n(\widetilde w)>\chi^2_{1,\alpha}\}$ of asymptotic exact size $\alpha\in(0,1)$ is consistent for alternatives of the shape \eqref{eqn:alternative} with $w\widetilde w\geq 0$ and $\int w(x)\widetilde w(x) \,\mathrm{ d }x>0$. This can be concluded, for instance, from the subsequent Theorem \ref{theo:power_local_altern}. For $\widetilde w=w_i$ this consistency can be transferred to our $\phi_{n,\alpha}$ and, consequently, we combine the strength of each single weighted logrank test.
\begin{theorem}[Consistency]\label{theo:uncond_consis}
	Consider a fixed alternative $K$. Suppose for some $i=1,\ldots,m$ that $\phi_{n,\alpha}(w_i)$ is consistent for testing $H_=$ versus $K$, i.e., the error of second kind $E_K[1-\phi_{n,\alpha}(w_i)]$ tends to $0$ for all $\alpha\in(0,1)$. Then $\phi_{n,\alpha}$ is consistent as well. 
\end{theorem}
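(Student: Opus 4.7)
The plan is to bound the combined quadratic form $S_n$ from below by the studentised single-weight statistic $T_n(w_i)^2/\widehat\sigma_n^2(w_i)$ associated with the consistent direction, so that consistency of $\phi_{n,\alpha}(w_i)$ immediately transfers to $\phi_{n,\alpha}$. For any strictly positive definite matrix $A$, Cauchy--Schwarz applied to $e_i^T x = (A^{1/2} e_i)^T (A^{-1/2} x)$ yields
\begin{equation*}
x^T A^{-1} x \;\geq\; \frac{(e_i^T x)^2}{e_i^T A e_i}.
\end{equation*}
Setting $A=\widehat\Sigma_n$ and $x=T_n$, and observing that $(\widehat\Sigma_n)_{ii}=\widehat\sigma_n^2(w_i)$ and $(T_n)_i = T_n(w_i)$, this gives $S_n\geq T_n(w_i)^2/\widehat\sigma_n^2(w_i)$ on the event $\{\widehat\Sigma_n \text{ is invertible}\}$.

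To turn the Moore--Penrose pseudoinverse into an ordinary inverse, I would argue that $\widehat\Sigma_n$ has full rank $m$ with $P_K$-probability tending to one. Under the fixed alternative $K$, uniform convergence of the pooled Nelson--Aalen and Kaplan--Meier estimators implies that $\widehat\Sigma_n$ converges in probability to a deterministic matrix $\Sigma_K$ whose $(r,s)$-entry integrates $w_r w_s$, composed with the pooled limit distribution, against a positive measure supported on a non-degenerate sub-interval $[0,\varepsilon]\subset[0,1]$; the non-degeneracy is guaranteed by the standing assumption $F_1(\tau)>0$ or $F_2(\tau)>0$ from Section \ref{sec:uncond_test}. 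Assumption \ref{ass:linear_independent} then forces $\Sigma_K$ to be positive definite, because $\beta^T\Sigma_K\beta=0$ would entail $\sum_r\beta_r w_r \equiv 0$ on such a $[0,\varepsilon]$, hence $\beta=0$.

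To close the argument I would unfold the consistency hypothesis $E_K[1-\phi_{n,\alpha}(w_i)]\to 0$ for every $\alpha\in(0,1)$: given any threshold $c>0$, choosing $\alpha$ so small that $\chi^2_{1,\alpha}>c$ yields $P_K(T_n(w_i)^2/\widehat\sigma_n^2(w_i)>c)\to 1$. Combined with the lower bound above, this gives $S_n\to\infty$ in $P_K$-probability, and hence $P_K(S_n>\chi^2_{m,\alpha_0})\to 1$ for every $\alpha_0\in(0,1)$, which is the desired consistency of $\phi_{n,\alpha}$. The main technical hurdle is the second step---locating the support of the limiting integrating measure under an arbitrary alternative $K$ and exploiting Assumption \ref{ass:linear_independent} to deduce positive definiteness of $\Sigma_K$; once that is in hand, the reduction to $\phi_{n,\alpha}(w_i)$ is elementary.
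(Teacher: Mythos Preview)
Your proposal is correct and follows the same strategy as the paper's proof: bound $S_n$ from below by the single-direction studentised statistic $T_n(w_i)^2/\widehat\sigma_n^2(w_i)$ and then transfer consistency. The paper obtains this lower bound in one line by citing \citet{BrendelETAL2014}, who establish $S_n\ge T_n(w_i)^2/\widehat\sigma_n^2(w_i)$ for every sample (in particular without assuming $\widehat\Sigma_n$ invertible), so no asymptotic analysis of $\widehat\Sigma_n$ under the alternative is carried out; the remainder of the paper's argument is exactly your third step.

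Your second step---showing $\widehat\Sigma_n\to\Sigma_K$ in probability under $K$ and invoking Assumption~\ref{ass:linear_independent} to conclude that $\Sigma_K$ is positive definite---is correct but superfluous here. It does buy you something: an explicit Cauchy--Schwarz derivation valid on the high-probability event $\{\widehat\Sigma_n\text{ invertible}\}$, whereas the cited inequality requires knowing (or trusting) that $T_n$ always lies in the range of $\widehat\Sigma_n$ so that the Moore--Penrose version of the bound goes through. The trade-off is that your route uses Assumption~\ref{ass:linear_independent}, which the paper's proof of this particular theorem does not need.
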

Consequently, our test $\phi_{n,\alpha}$ is consistent for alternatives \eqref{eqn:alternative} with $w$ coming from the linear subspace $\mathcal W_m=\{ \sum_{i=1}^{m}\beta_iw_i:\beta=(\beta_1,\ldots,\beta_m)\in\R^m,\;\beta\neq 0\}$ of $\mathcal W$ or, more generally, with $w$ such that $ww_i \geq 0$ and $\int w(x)w_i(x)\,\mathrm{ d }x>0$ for some $i=1,\ldots,m$. Having this in mind the statistician should make his choice for the weights. 

In the introduction we already mentioned local alternatives, which are small perturbations of the null assumption $F_1=F_2$, or equivalently $A_1=A_2$. 
Let $F_0$ be a continuous (baseline) distribution and $A_0$ be the corresponding (baseline) cumulative hazard function. From now on, the survival distributions of both groups depend on the sample size $n$ and we write $F_{j,n}$ as well as $A_{j,n}$ instead of $F_j$ and $A_j$, respectively. Let $A_{1,n}$ and $A_{2,n}$ be perturbations of the baseline $A_0$ in (opposite) hazard directions $w$ and $-w$. To be more specific, let
\begin{align}\label{eqn:local_alternative}
A_{j,n}(t)=\int_{[0,t]} 1+ c_{j,n} w\circ F_0 \,\mathrm{ d } A_0\quad (t\geq 0),\quad c_{j,n}= \frac{(-1)^{j+1}}{n_j}\Bigl( \frac{n_1n_2}{n} \Bigr)^{1/2}
\end{align}
for some $w\in\mathcal W$ and sufficiently large $n$ such that the integrand is nonnegative over the whole line. Clearly, the two regression coefficients $c_{j,n}=O(n^{-1/2})$ are asymptotically of rate $n^{-1/2}$. These coefficients are often used for two-sample rank tests.  We denote by $\E_{n,w}$ the expectation under \eqref{eqn:local_alternative} and by $\E_{n,0}$ the expectation under the null $F_{1,n}=F_{2,n}=F_0$.
\begin{theorem}[Power under local alternatives]\label{theo:power_local_altern}
	Suppose that Assumption \ref{ass:linear_independent} and  $n_1/n\to\eta\in(0,1)$ hold. Define $\psi=[(1-G_1)(1-G_2)]/[\eta(1-G_1)+(1-\eta)(1-G_2)]$. Under \eqref{eqn:local_alternative} $S_n$ converges in distribution to a $\chi_m^2(\lambda)$-distributed random variable with noncentrality parameter $\lambda= a^T\Sigma^-a$, where $a=(\int w\circ F_0 w_i\circ F_0 \psi\,\mathrm{ d } F_0 )_{i\leq m}$ and the entries of $\Sigma$ are $\Sigma_{r,s}=\int w_r\circ F_0w_s\circ F_0\psi\,\mathrm{ d }F_0$ $(1\leq r,s\leq m)$.
\end{theorem}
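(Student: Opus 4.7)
The plan is to deduce the limit of $S_n = T_n^T \widehat\Sigma_n^- T_n$ from joint asymptotic normality of $T_n$ under the sequence of local alternatives \eqref{eqn:local_alternative}, together with consistency of $\widehat\Sigma_n$, and then to invoke the continuous mapping theorem on this quadratic form. The bridge from the null to the local alternative will be Le Cam's third lemma, following the same line of attack as in \citet{BrendelETAL2014} but avoiding any projection arguments.

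First, under the null $F_{1,n}=F_{2,n}=F_0$, I would establish $T_n\to N(0,\Sigma)$ in distribution. Writing each $T_n(w_i)$ as a stochastic integral against the counting-process martingales $M_j = N_j - \int Y_j\,\mathrm{d}A_0$ and applying Rebolledo's multivariate martingale CLT, the bulk of the work is to show that the predictable covariation of $(T_n(w_r),T_n(w_s))$ converges in probability to $\Sigma_{r,s}$. This uses the uniform consistency of $Y_j/n_j$ for $(1-F_0)(1-G_j)$ on $[0,\tau)$ together with $\widehat F\to F_0$ uniformly, yielding $(n/(n_1n_2))Y_1Y_2/Y\to (1-F_0)\psi$ and hence the expression for $\Sigma_{r,s}$ after integrating against $\mathrm{d}A_0$. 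The same ingredients give $\widehat\Sigma_n\to\Sigma$ in probability under the null.

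Next, I would verify the LAN expansion for the counting-process experiment by Taylor-expanding
\begin{align*}
\log\frac{\mathrm{d}P_{n,w}}{\mathrm{d}P_{n,0}} = \sum_{j,i}\int\log(1+c_{j,n}w\circ F_0)\,\mathrm{d}N_{j,i} - \sum_{j,i}\int c_{j,n}(w\circ F_0)Y_{j,i}\,\mathrm{d}A_0.
\end{align*}
Its leading linear part is a martingale integral whose joint distribution with $T_n$ is asymptotically centred multivariate normal, while the quadratic correction converges in probability to $-\tfrac12\sigma_w^2$, where $\sigma_w^2$ denotes the null variance of the leading martingale integral. This establishes contiguity, and Le Cam's third lemma then yields $T_n\to N(a,\Sigma)$ in distribution under $P_{n,w}$ with $a_i=\lim_n\mathrm{Cov}_{P_{n,0}}(T_n(w_i),\log(\mathrm{d}P_{n,w}/\mathrm{d}P_{n,0}))$. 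A direct computation of this covariance limit reduces to $\int w\circ F_0\cdot w_i\circ F_0\cdot\psi\,\mathrm{d}F_0$, exactly as claimed.

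By contiguity, $\widehat\Sigma_n\to\Sigma$ in probability also under $P_{n,w}$. Assumption \ref{ass:linear_independent}, combined with the fact that $\psi>0$ on a right neighbourhood of $0$ on which $F_0$ is strictly increasing, implies that $\Sigma$ is positive definite, so $\widehat\Sigma_n$ is eventually invertible and $\widehat\Sigma_n^-\to\Sigma^{-1}$ in probability. The continuous mapping theorem then gives $S_n\to X^T\Sigma^{-1}X$ in distribution with $X\sim N(a,\Sigma)$, and this quadratic form is $\chi_m^2(a^T\Sigma^{-1}a)$-distributed. I expect the main technical hurdle to be the computation of the shift $a$: aligning the exact constants in $c_{j,n}$ with the factors $1-F_0$ and the censoring survival functions that combine into $\psi$ is routine but error-prone, whereas invertibility of $\Sigma$ and convergence of the Moore--Penrose inverse are comparatively straightforward under Assumption \ref{ass:linear_independent}.
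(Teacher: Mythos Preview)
Your proposal is correct but follows a different route from the paper's own proof. The paper does not pass through LAN and Le Cam's third lemma here; instead it invokes Theorem 7.4.1 of \citet{FlemingHarrington} directly under the local alternatives \eqref{eqn:local_alternative}, together with the Cram\'er--Wold device, to obtain $T_n\to N(a,\Sigma)$ and $\widehat\Sigma_n\to\Sigma$ in probability in one stroke. Invertibility of $\Sigma$ is then read off from the argument already given in the proof of Theorem~\ref{theo:uncond_null}, and the continuous mapping theorem finishes the job. Your two-step approach --- null CLT via Rebolledo, then shift via LAN and Le Cam --- is perfectly valid and in fact anticipates exactly the machinery the paper develops later in Lemma~\ref{lem:LAN} for the admissibility result (Theorem~\ref{theo:uncon_admissible}); so your route has the merit of unifying the analysis under local alternatives and the admissibility argument, at the cost of establishing the LAN expansion earlier than strictly needed. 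The paper's route is shorter for this particular theorem because Fleming--Harrington's result packages the contiguous-alternative CLT and the identification of the mean shift $a$ into a single citation. Your identification of the main technical hurdle --- matching the constants $c_{j,n}$ to the form of $\psi$ when computing the covariance with the log-likelihood ratio --- is accurate for your route; in the paper's route that computation is absorbed into the cited theorem.
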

From the well known properties of noncentral $\chi^2$-distributions we obtain from Theorems \ref{theo:uncond_null} and \ref{theo:power_local_altern} that our test is asymptotically unbiased under local alternatives, i.e., $\E_{n,w}(\phi_{n,\alpha})\to \beta_{w,\alpha} \geq\alpha$. In the proof of Theorem \ref{theo:power_local_altern} we show that the limiting covariance $\Sigma$ is invertible. That is why $a\neq 0$ implies $\lambda=a^T\Sigma^- a>0$ and $\E_{n,w}(\phi_{n,\alpha})\to \beta_{w,\alpha} >\alpha$. Clearly, $w\in\mathcal W_m$ lead to $a\neq 0$ and, hence, our test has nontrivial power for local alternatives in hazard direction $w$ coming from the linear subspace $\mathcal W_m$. For this kind of alternatives the test is even admissible, a certain kind of optimality which says that there is no test which achieves better asymptotic power for all hazard alternatives $w\in\mathcal W_m$ simultaneously. 
\begin{theorem}[Admissibility]\label{theo:uncon_admissible}
	Suppose that Assumption \ref{ass:linear_independent} holds. Then there is no test sequence $\varphi_n$ ($n\in\N$) of asymptotic size $\alpha$, i.e., $\limsup_{n\to\infty}E_{H_=}( \varphi_n)\leq \alpha$, such that $\liminf_{n\to\infty}[\E_{n,w}(\varphi_n)-\E_{n,w}(\phi_{n,\alpha})] $ is nonnegative for all $w\in\mathcal W_m$ and positive for at least one $w\in \mathcal W_m$. 
\end{theorem}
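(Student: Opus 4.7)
The plan is to reduce the claim to an admissibility statement in the Gaussian limit experiment generated by the local alternatives \eqref{eqn:local_alternative}, and then to invoke the classical fact that a test with convex acceptance region is admissible in a Gaussian shift.

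First, I would identify the limit experiment. For an arbitrary direction $w=\sum_{i=1}^m\beta_i w_i\in\mathcal W_m$, the noncentrality vector in Theorem \ref{theo:power_local_altern} evaluates to $a=\Sigma\beta$, so $T_n$ converges in distribution to $Y\sim N(\Sigma\beta,\Sigma)$ and, by contiguity, $\widehat\Sigma_n$ is consistent for $\Sigma$ also under the alternative. Assumption \ref{ass:linear_independent} ensures that $\Sigma$ is invertible, as stated in Theorem \ref{theo:power_local_altern}. Hence the sequence of experiments parametrised by $\beta\in\R^m$ converges in the Le Cam sense to the Gaussian shift $\{N(\Sigma\beta,\Sigma):\beta\in\R^m\}$, in which $\phi_{n,\alpha}$ is represented by $\phi^{*}=\mathbf{1}\{Y^T\Sigma^{-1}Y>\chi_{m,\alpha}^2\}$. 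The underlying LAN expansion, with central sequence $T_n$ and information matrix $\Sigma$, follows from the martingale central limit theorem of \citet{Gill_1980} combined with a standard Taylor expansion of the log-likelihood ratios along \eqref{eqn:local_alternative}.

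Second, suppose for contradiction that a sequence $\varphi_n$ with the properties of the theorem exists, with strict dominance at some $\beta_0\in\R^m\setminus\{0\}$. By weak compactness of the set of tests and Le Cam's asymptotic representation theorem, along a subsequence $\varphi_n$ admits a limit test $\varphi^{*}\colon\R^m\to[0,1]$ in the Gaussian experiment. Contiguity then yields $\E_0\varphi^{*}\leq\alpha$ together with pointwise convergence of the power functions of both $\varphi_n$ and $\phi_{n,\alpha}$, so that
\begin{align*}
\E_\beta\varphi^{*}\geq\E_\beta\phi^{*}\text{ for every }\beta\in\R^m,\qquad\E_{\beta_0}\varphi^{*}>\E_{\beta_0}\phi^{*}.
\end{align*}

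Third, I would derive the contradiction from the admissibility of $\phi^{*}$. After the whitening $Z=\Sigma^{-1/2}Y$, the acceptance region of $\phi^{*}$ becomes the closed Euclidean ball $\{z\in\R^m:z^T z\leq\chi_{m,\alpha}^2\}$, a closed convex set. By the classical theorem of Stein and Birnbaum, any such test is admissible as a test of $H_0\colon\beta=0$ against $\beta\neq 0$ in the standard Gaussian shift experiment, contradicting the strict dominance at $\beta_0$. The main obstacle is the second step: rigorously transferring a dominating finite-sample sequence to a dominating test in the Gaussian limit. This requires verifying LAN for the right-censored weighted logrank model under \eqref{eqn:local_alternative} with $T_n$ acting as central sequence, and then applying Le Cam's asymptotic representation theorem to extract a limit test along a subsequence. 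Once this groundwork is in place, the admissibility in the Gaussian limit is entirely classical; a very similar strategy, phrased in projection language, is already used by \citet{BrendelETAL2014}.
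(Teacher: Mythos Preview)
Your proposal is correct and follows essentially the same route as the paper: establish LAN with central sequence $T_n$ and information $\Sigma$, pass to the Gaussian shift limit, extract a limit test from $\varphi_n$ via Le Cam's asymptotic representation theorem, and contradict Stein's convex-acceptance-region admissibility result. The only difference is cosmetic: the paper isolates the LAN expansion as a separate lemma, proving it via $L_2$-differentiability of the hazard family and the Efron--Johnstone operator rather than the informal ``standard Taylor expansion'' you mention, but the overall architecture is identical.
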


All proofs are deferred to the appendix.

\section{Permutation test}\label{sec:permutation}

Denote by $X_{(1)}\leq \cdots\leq X_{(n)}$ the order statistics of the pooled sample. Let $c_{(k)}\in\{c_{1,n},c_{2,n}\}$ and $\delta_{(k)}\in\{0,1\}$ $(1\leq k \leq n)$ be the group and the censoring status corresponding to $X_{(k)}$, i.e., if $X_{(k)}=X_{j,i}$ then $c_{(k)}=c_{j,n}$ and $\delta_{(k)}=\delta_{j,i}$. The counting processes $N_{j},Y_{j}$ used for the test statistic jump only at the order statistics. Their value at these points can be expressed by the components of $c^{(n)}=(c_{(1)},\ldots,c_{(n)})$ and  $\delta^{(n)}=(\delta_{(1)},\ldots,\delta_{(n)})$, for example $N_j(X_{(k)})=\sum_{i=1}^k \delta_{(i)} \mathbf{1}\{c_{(i)}=c_{j,n}\}$. Consequently, the test statistic only depends on $(c^{(n)},\delta^{(n)})$. That is why we write $S_n(c^{(n)},\delta^{(n)})$ instead of $S_n$ throughout this section. The basic idea of our permutation test is to keep $\delta^{(n)}$ fixed and to permute $c^{(n)}$ only, i.e., to randomly permute the group membership of the individuals. For the case $m=1$, i.e., $S_n=T_n(w)^2/\widehat \sigma_n^2(w)$, this permutation idea was already used by \citet{Neuhaus1993} and \citet{JanssenMayer2001}. In simulations of \citet{Neuhaus1993} and \citet{HellerVenkatraman_1996} the resulting permutation test had a good finite sample performance, even in the case of unequal censoring $G_1\neq G_2$. 

Let $c^\pi_n$ be a uniformly distributed permutation of $c^{(n)}$ and be independent of $\delta^{(n)}$. Denote by $c^*_{n,\alpha}(\delta)$ $(\alpha\in(0,1), \ \delta\in\{0,1\}^n)$ the $(1-\alpha)$-quantile of the permutation statistic $S_n(c_n^\pi,\delta)$. Then our permutation test is given by $\phi_{n,\alpha}^*=\mathbf{1}\{S_n(c^{(n)},\delta^{(n)})>c_{n,\alpha}^*(\delta^{(n)})\}$. This test shares all the asymptotic properties of the unconditional test verified in the previous section.
\begin{theorem}\label{theo:perm}
	Suppose that Assumption \ref{ass:linear_independent} is fulfilled and fix $\alpha\in(0,1)$. Then $\phi_{n,\alpha}^*$ is asymptotically exact under $H_=$ and $\phi_{n,\alpha}^*$ is consistent for fixed alternative $K$ whenever $\phi_{n,\alpha}$ is consistent for $K$. Under local alternatives \eqref{eqn:local_alternative} $\phi_{n,\alpha}^*$ and $\phi_{n,\alpha}$ are asymptotically equivalent, i.e., $E_{n,w}(|\phi_{n,\alpha}-\phi_{n,\alpha}^*|)\to 0$, and, hence, they have the same asymptotic power under local alternatives. In particular, $\phi_{n,\alpha}^*$ is asymptotically admissible, compare to Theorem \ref{theo:uncon_admissible}.
\end{theorem}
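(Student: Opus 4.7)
The plan is to establish a conditional central limit theorem for the permutation-randomized vector $T_n^\pi = [T_n(w_1;c_n^\pi,\delta^{(n)}),\ldots,T_n(w_m;c_n^\pi,\delta^{(n)})]^T$. Concretely, I would show that, conditionally on $\delta^{(n)}$ and in probability, $T_n^\pi$ converges in distribution to $N(0,\Sigma)$, with $\Sigma$ as in Theorem \ref{theo:power_local_altern}, and that the permutation covariance estimator $\widehat\Sigma_n^\pi$ also converges in probability to $\Sigma$. The continuous mapping theorem then implies that the conditional distribution of $S_n(c_n^\pi,\delta^{(n)})$ converges to the $\chi_m^2$-law in probability, and hence that the permutation critical value $c_{n,\alpha}^*(\delta^{(n)})$ converges in probability to the deterministic quantile $\chi_{m,\alpha}^2$.

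Once this is in hand, the three assertions reduce to a single principle: the permutation critical value and the fixed critical value of $\phi_{n,\alpha}$ agree asymptotically. Asymptotic exactness under $H_=$ then follows from Slutsky, because $S_n$ converges in distribution to $\chi_m^2$ by Theorem \ref{theo:uncond_null}. For a fixed alternative $K$ at which $\phi_{n,\alpha}(w_i)$ is consistent, Theorem \ref{theo:uncond_consis} yields that $S_n$ diverges in $P_K$-probability, while $c_{n,\alpha}^*(\delta^{(n)})$ stays bounded in probability: the conditional CLT extends to $P_K$ because the permutation law of $T_n^\pi$ depends on the data only through the pooled empirical distribution of $(X_{j,i},\delta_{j,i})$, which obeys a Glivenko--Cantelli limit even under $K$.

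For local alternatives the standard route is via Le Cam's first and third lemmas. The alternatives in \eqref{eqn:local_alternative} are mutually contiguous to $H_=$, as one verifies through the log-likelihood expansion used in \citet{Gill_1980}. Contiguity transfers the convergence in $P_{H_=}$-probability of $c_{n,\alpha}^*(\delta^{(n)})$ to $\chi_{m,\alpha}^2$ into convergence in $P_{n,w}$-probability. Since the unconditional limit of $S_n$ under \eqref{eqn:local_alternative} is the absolutely continuous $\chi_m^2(\lambda)$-distribution, the probability that $S_n$ lies in any shrinking neighbourhood of $\chi_{m,\alpha}^2$ vanishes, which yields $\E_{n,w}(|\phi_{n,\alpha}-\phi_{n,\alpha}^*|)\to 0$. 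Admissibility of $\phi_{n,\alpha}^*$ then follows immediately from this asymptotic equivalence together with Theorem \ref{theo:uncon_admissible}.

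The main obstacle is the conditional CLT for $T_n^\pi$. A calculation reduces each $T_n(w_j; c_n^\pi, \delta^{(n)})$, up to asymptotically negligible terms, to a linear combination of the indicators $\mathbf{1}\{c_{\pi(k)} = c_{1,n}\}$ with coefficients that depend measurably on $\delta^{(n)}$ alone, placing it in the scope of a combinatorial CLT for simple random sampling without replacement. The delicate step is verifying, under $H_=$ and under each local alternative, a Lindeberg-type envelope condition and the convergence of the conditional covariance matrix $\widehat\Sigma_n^\pi$ to the same deterministic $\Sigma$. Assumption \ref{ass:linear_independent} enters here to ensure that $\Sigma$ is invertible, as in the proof of Theorem \ref{theo:power_local_altern}, and hence that the Moore--Penrose inverse is continuous at $\Sigma$, which the continuous-mapping step above requires.
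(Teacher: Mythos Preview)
Your overall architecture matches the paper's: establish that the conditional permutation law of $S_n$ converges to $\chi_m^2$ (so that $c_{n,\alpha}^*(\delta^{(n)})\to\chi_{m,\alpha}^2$ in probability), then deduce exactness under $H_=$, consistency under fixed $K$ via divergence of $S_n$, equivalence under local alternatives via contiguity, and admissibility from Theorem~\ref{theo:uncon_admissible}. This is precisely the route the paper takes through its Lemma~\ref{lem:perm_asym_equi} and Remark~\ref{rem:contiguity}.

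However, one concrete step in your plan is wrong and would not go through as written. You claim that, conditionally on $\delta^{(n)}$, $T_n^\pi\to N(0,\Sigma)$ and $\widehat\Sigma_n^\pi\to\Sigma$ with the \emph{same} $\Sigma$ as in Theorem~\ref{theo:power_local_altern}. This is false in general. The permutation world mimics an exchangeable (pooled) model, and the conditional limiting covariance is a different matrix $B$ with entries $B_{r,s}=\int w_r(F^*)\,w_s(F^*)\,\mathrm{d}H^1$, where $H^1$ and $F^*$ are built from the pooled subdistribution functions. Already under $H_=$ with $G_1\neq G_2$ one checks $F^*=F_0$ but $\mathrm{d}H^1=[\eta(1-G_1)+(1-\eta)(1-G_2)]\,\mathrm{d}F_0$, whereas $\Sigma$ carries the weight $\psi=(1-G_1)(1-G_2)/[\eta(1-G_1)+(1-\eta)(1-G_2)]$; these do not agree. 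Under a fixed alternative $K$ the discrepancy is even more pronounced, since $F^*$ and $H^1$ then depend on the mixture of $F_1\neq F_2$.

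This does not destroy your argument, because $S_n$ is self-normalised: if $T_n^\pi\to N(0,B)$ and $\widehat\Sigma_n^\pi\to B$ conditionally, then $S_n(c_n^\pi,\delta^{(n)})\to\chi^2_{\mathrm{rank}(B)}$ regardless of whether $B$ equals $\Sigma$. But you must then prove $\mathrm{rank}(B)=m$, and this is a separate step that your proposal omits. The paper carries it out by showing $\beta^TB\beta=0$ forces $\sum_i\beta_iw_i$ to vanish on $[0,F^*(\infty)]$, and then invoking $F^*(\infty)\geq F^*(\tau)>0$ together with Assumption~\ref{ass:linear_independent}. Without this, your appeal to Assumption~\ref{ass:linear_independent} (which you use only to invert $\Sigma$) does not reach the matrix that actually governs the permutation limit.
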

Since the distribution of $S_n(c_n^\pi,\delta)$ is discrete we may add a randomisation term in the test's definition: $\phi_{n,\alpha}^*= \mathbf{1}\{S_n(c^{(n)},\delta^{(n)}) > c^*_{n,\alpha}(\delta^{(n)}) \} + \gamma_{n,\alpha}^*(\delta^{(n)})\mathbf{1}\{S_n(c^{(n)},\delta^{(n)}) = c^*_{n,\alpha}(\delta^{(n)}) \}$, $\gamma_{n,\alpha}^*(\delta)\in[0,1]$ $(\delta\in\{0,1\}^n)$. Since $c^{(n)}$ and $\delta^{(n)}$  are independent under the restricted null $ H_{\text{res}}: \{F_1=F_2,\ G_1=G_2\}$, see \citet{Neuhaus1993}, the test with additional randomisation term is even finitely exact, i.e., $E_{H_{\text{res}}}(\phi_{n,\alpha}^*)=\alpha$. 

\section{Simulations}\label{sec:simus}

\subsection{Type-I error}
To analyse the behaviour of the proposed test statistic for small sample sizes, we performed a simulation study implementing various situations. All simulations were conducted with the \textsc{R} computing environment, version 3.2.3 \citet{R} using 10,000 simulation and 1,000 permutation runs. 

First, we considered the behaviour of different tests under the null hypothesis $	H_{=}: F_1=F_2$.
Survival times were generated following an exponential $\Exp(1)$ distribution.
Censoring times were simulated to follow the same distribution as the survival times, but with varying parameters to reflect different proportions of censoring: No censoring, equal censoring in both groups, where the parameters were chosen such that on average 15\% of individuals were censored, and unequal censoring distributions reflecting 10\% and 20\% censoring (on average) in the first and second group, respectively. 
Sample sizes were chosen to construct balanced as well as unbalanced designs, namely $(n_1, n_2) =(50, 50), (n_1, n_2)= (30, 70), (n_1, n_2)=(100, 100)$ and $(n_1, n_2)=(150, 50)$.
For all scenarios, we compared the performance of our test with and without permutation based on weights of the form
\begin{align}\label{eqn:weights_wrg_wcross}
w^{(r,g)}(u)	= u^r(1-u)^g\quad (r,g\in\N_0),\quad w_{\text{cross}}(u)=1-2u,
\end{align}
including the famous weights $w^{(0,0)}$ (\textit{proportional hazards}), $w^{(1,1)}$ (\textit{central hazards}) and $w_{\text{cross}}$  (\textit{crossing hazards}). But also mid-early, early, mid-late and late hazards are included in this class of hazard weights. We distinguished between testing based on two or four hazard directions $w_i$, namely proportional and crossing hazards
$$
w_1(u) = 1, \quad w_2(u) = 1-2u
$$
as well as additionally central and early hazards 
$$
w_3(u) = u(1-u),\quad w_4(u) = u(1-u)^3.
$$
The resulting type-I error rates are displayed in Table \ref{tab:typIerror_exp}. As we can see from the tables, the permutation version of the test always keeps the nominal level of 5\% better than the corresponding $\chi^2$-approximation. Testing based on two or four directions, in contrast, does not change the type-I error much for neither the permutation test nor the $\chi^2$-approximation.

\begin{table}[ht]
	\centering
	\caption{Type-I error rates in \% (nominal level 5\%) for exponentially distributed censoring and survival times, testing based on two or four hazard directions with and without permutation procedure, respectively}
	\label{tab:typIerror_exp}
	\begin{tabular}{cccccc}
		\multicolumn{2}{c}{}& \multicolumn{2}{c}{Permutation test} & \multicolumn{2}{c}{$\chi^2$-Approximation}\\
		$(n_1, n_2)$ & censoring & { 4 directions} & { 2 directions} & { 4 directions} & { 2 directions}\\ [5pt]
		
		\multirow{3}{*}{(50, 50)} & None & 5$\cdot$12 & 5$\cdot$35 & 6$\cdot$30 & 6$\cdot$09\\ 
		& equal & 4$\cdot$90 & 5$\cdot$13 & 5$\cdot$74 & 5$\cdot$83 \\ 
		& unequal & 4$\cdot$85 & 4$\cdot$54 & 5$\cdot$79 & 5$\cdot$19 \\ 
		\multirow{3}{*}{(30, 70)} & none & 4$\cdot$67 & 4$\cdot$91 & 6$\cdot$61 & 5$\cdot$93\\ 
		& equal & 5$\cdot$14 & 4$\cdot$88 & 6$\cdot$74 & 5$\cdot$86 \\ 
		& unequal & 4$\cdot$97 & 5$\cdot$22 & 6$\cdot$75 & 6$\cdot$13\\
		\multirow{3}{*}{(100, 100)} & none & 4$\cdot$71 & 5$\cdot$28 & 5$\cdot$58 & 5$\cdot$55 \\ 
		& equal & 5$\cdot$27 & 5$\cdot$03 & 5$\cdot$99 & 5$\cdot$28 \\ 
		& unequal & 4$\cdot$93 & 5$\cdot$26 & 5$\cdot$61 & 5$\cdot$56 \\ 
		\multirow{3}{*}{(150, 50)} & none & 4$\cdot$77 & 5$\cdot$05 & 5$\cdot$97 & 5$\cdot$70 \\ 
		& equal & 4$\cdot$96 & 5$\cdot$03 & 6$\cdot$25 & 5$\cdot$61 \\ 
		& unequal & 5$\cdot$52 & 5$\cdot$38 & 6$\cdot$79 & 6$\cdot$08 \\ 
	\end{tabular}
\end{table}

%

\subsection{Power behaviour against various alternatives}\label{sec:simu_alternative}

In a second simulation study, we considered the power behaviour of the test under various alternatives using 1,000 simulation and 1,000 permutation runs. Since we found the $\chi^2$-approximation to be slightly liberal in all considered scenarios, we excluded it from the power comparisons.
We again considered the exponential distribution, i.e., survival times in the first group were simulated to follow an $\Exp(1)$ distribution. The simulated data for the second group was generated  according to
$$
A_2(t)=\int_{[0,t]} 1+ \vartheta w_i\circ F_1 \,\mathrm{ d } A_1 
$$
with different weight functions $w_i$ $(i=1, \dots, 4)$ as above. 
Realizations of the distribution belonging to $A_2$ were generated using an acceptance-rejection procedure. The parameter $\vartheta$ was chosen to range from $\vartheta=0$ (corresponding to the null hypothesis) to $\vartheta=$ 0$\cdot$9 in the case of proportional and crossing hazards, to $\vartheta=$ 4$\cdot$5 for central hazards and early hazards.
Censoring times were simulated as above to create equal as well as unequal censoring distributions. Sample sizes 
were $(n_1, n_2)=(50, 50)$ and $(n_1, n_2)= (30, 70)$.
For each alternative based on a weight function $w_i$, we considered our permutation test based on the two or four hazard directions $w_i$ stated above as well as the optimal test based on $T_n(w_i)/\widehat{\sigma}_n(w_i)$ and one of the other one-directional tests based on $T_n(w_j)/\widehat{\sigma}_n(w_j)$ for some $j\neq i$.
In the scenario with early hazards below (Figure \ref{fig:early}), we considered a more extreme choice of early hazard alternatives corresponding to $\tilde{w}_4(u) = (1-u)^5$.

Figures \ref{fig:proportional}--\ref{fig:early} show that choosing the wrong weight function can lead to a substantial loss in power, as already known in the literature. Moreover, both permutation tests follow the power curve of the optimal test. Furthermore, there is no notable difference between equal and unequal censoring proportions, while unbalanced designs tend to result in slightly lower power than balanced designs.  Since the classical logrank test is consistent for early, central and late hazard alternatives it is not surprising that the two-direction test has reasonable power in all scenarios. In Figure \ref{fig:early} the power line of the four--direction test intersect the one of the two--direction test and is even significantly higher for large $\vartheta$. This is an interesting phenomenon indicating two competing effects. On the one hand, we want to choose the true/best direction, but on the other hand, we should not choose too many weights since we would broaden the power into too many directions. In Figure \ref{fig:early} we see that only for a high weight effect size the benefit of choosing the right direction can compensate the negative effect of choosing too many weights. In all other scenarios, the two--direction test  has higher power than the four--direction test. Due to these observations we advice to use the two--direction test unless specific alternatives are more relevant or interesting for the underlying statistical analysis.

\begin{figure}
	\includegraphics[ width = \textwidth]{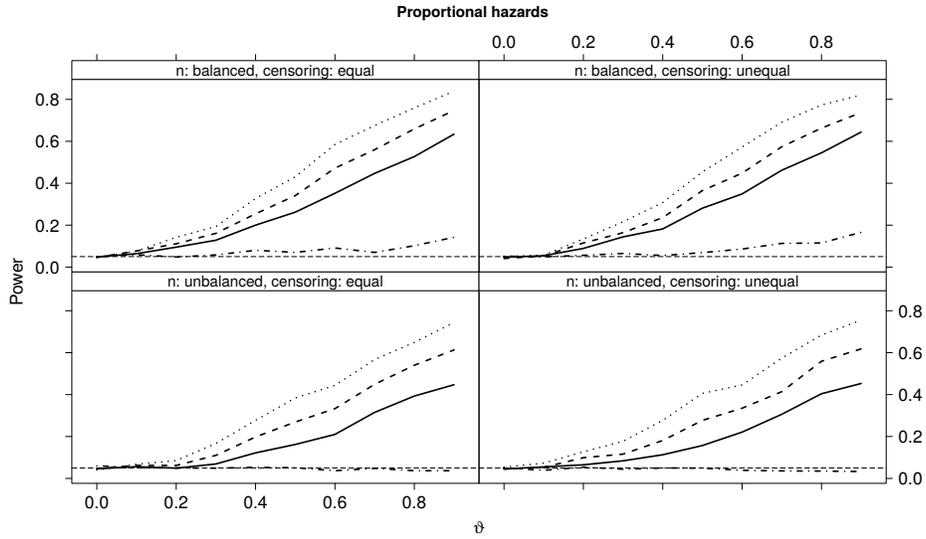}
	\caption{Power simulation results ($\alpha = 5\%$) of the permutation test $\phi^*_{n, \alpha}$ {based on} four (solid) and two (dashed) directions, the proportional hazards (logrank) test (dotted) and the crossing hazards test (dot-dash). Sample sizes are $(n_1,n_2)=(50,50)$ (balanced) and $(n_1,n_2)=(30,70)$ (unbalanced).}
	\label{fig:proportional}
\end{figure}

\begin{figure}
	\includegraphics[ width = \textwidth]{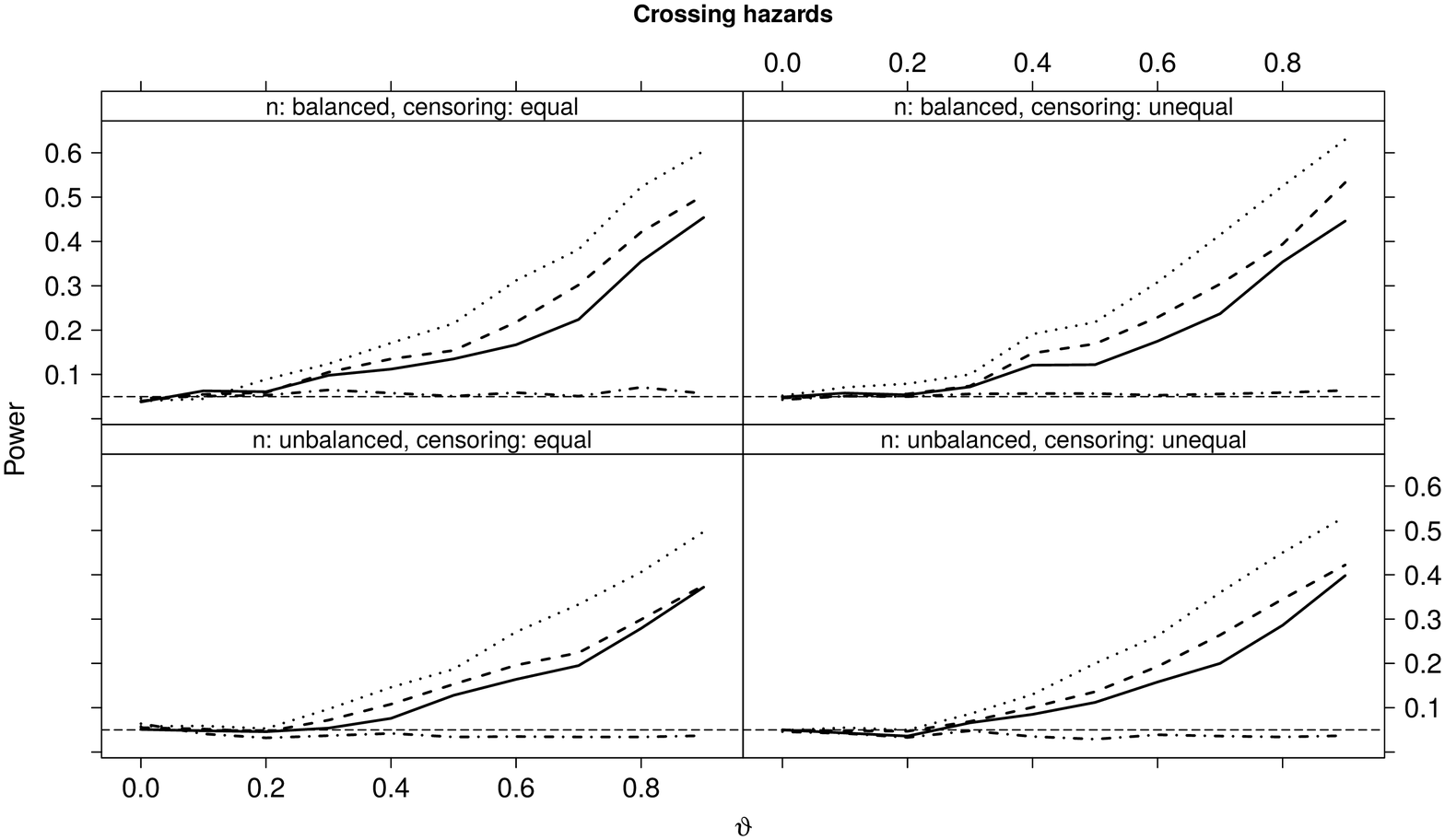}
	\caption{Power simulation results ($\alpha = 5\%$) of the permutation test $\phi^*_{n, \alpha}$ {based on} four (solid) and two (dashed) directions, the crossing hazards test (dotted) and the proportional hazards test (dot-dash). Sample sizes are $(n_1,n_2)=(50,50)$ (balanced) and $(n_1,n_2)=(30,70)$ (unbalanced).}
	\label{fig:crossing}
\end{figure}

\begin{figure}
	\includegraphics[ width = \textwidth]{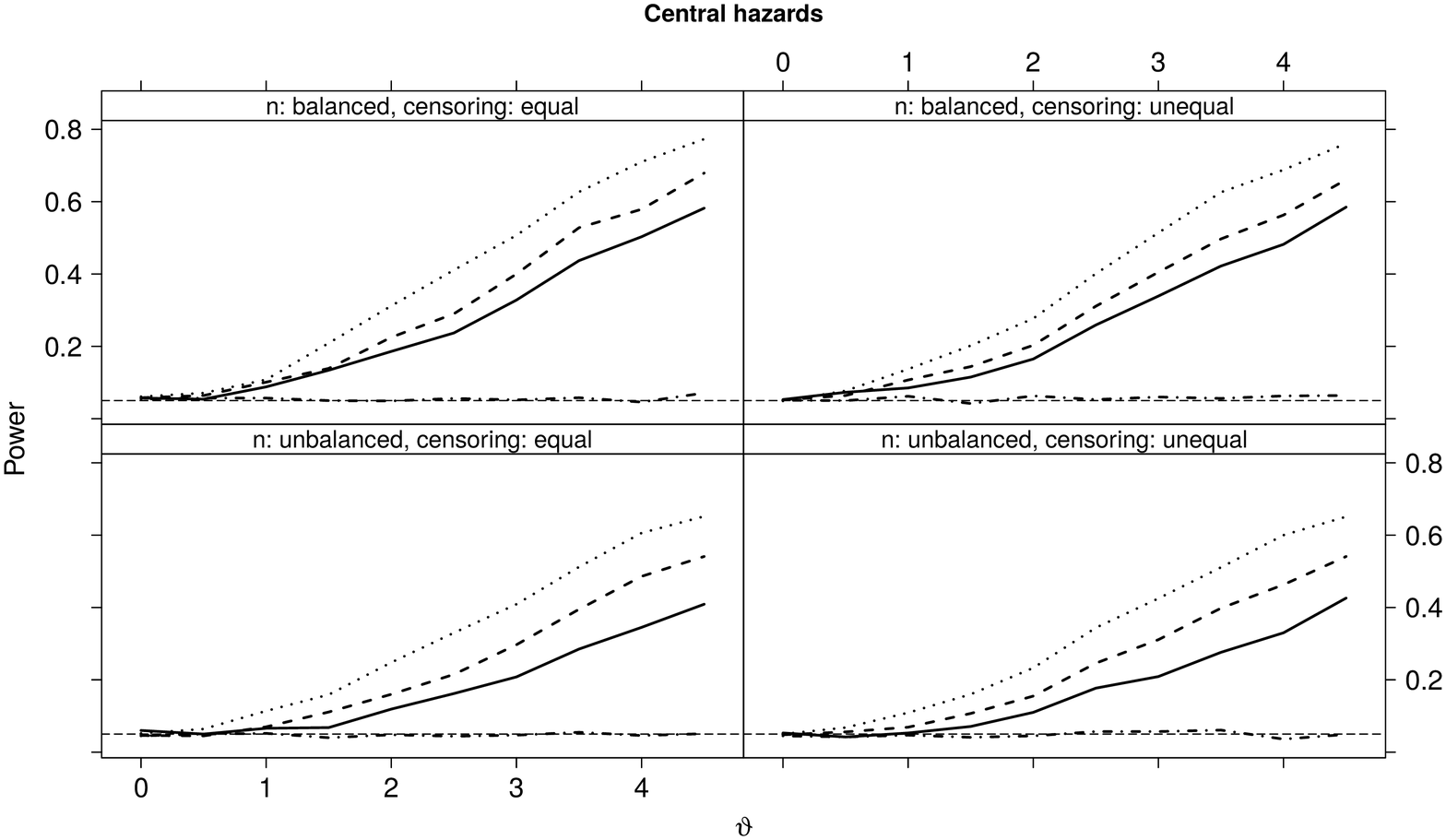}
	\caption{Power simulation results ($\alpha = 5\%$) of the permutation test $\phi^*_{n, \alpha}$ {based on} four (solid) and two (dashed) directions, the central hazards test (dotted) and the crossing hazards test (dot-dash). Sample sizes are $(n_1,n_2)=(50,50)$ (balanced) and $(n_1,n_2)=(30,70)$ (unbalanced).}
	\label{fig:central}
\end{figure}

\begin{figure}
	\includegraphics[ width = \textwidth]{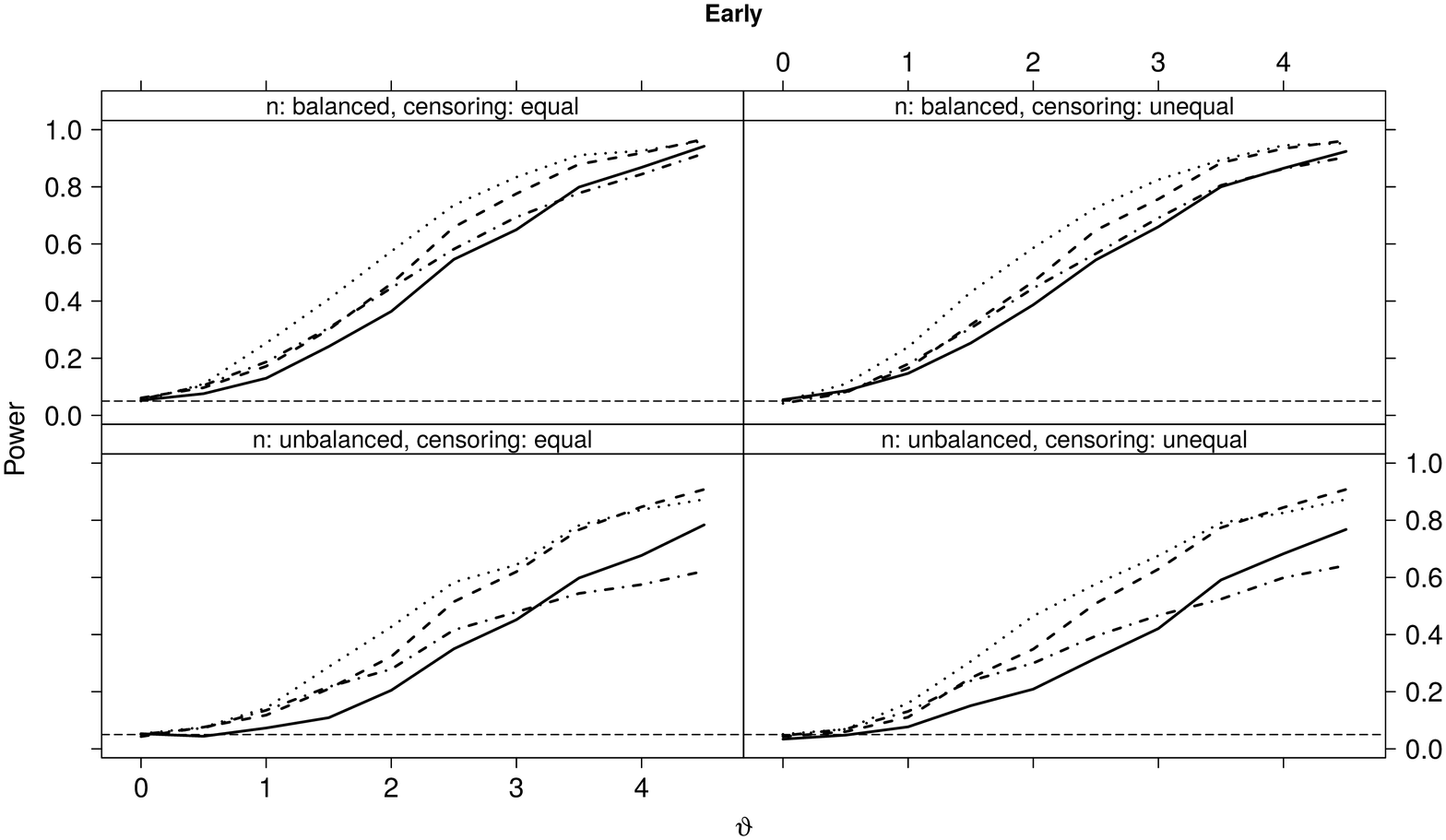}
	\caption{Power simulation results ($\alpha = 5\%$) of the permutation test $\phi^*_{n, \alpha}$ {based on} four (solid) and two (dashed) directions, the early hazards test (dotted) and the crossing hazards test (dot-dash). Sample sizes are $(n_1,n_2)=(50,50)$ (balanced) and $(n_1,n_2)=(30,70)$ (unbalanced).}
	\label{fig:early}
\end{figure}

\section{Real data example}\label{sec:real_data}

As a data example, we reanalyse the gastrointestinal tumor study from \citet{Stablein1981}, which is available in the \emph{coin} package \citet{coin} in \textsc{R}. This study compared the effect of chemotherapy alone versus a combination of radiation and chemotherapy in a treatment of gastrointestinal cancer.	
Of the 90 patients in the study, 45 were randomized to each of the two treatment groups. The Kaplan--Meier curves for the two groups are displayed in Fig.~\ref{fig:km-data}.

\begin{figure}
	\includegraphics[ width = \textwidth]{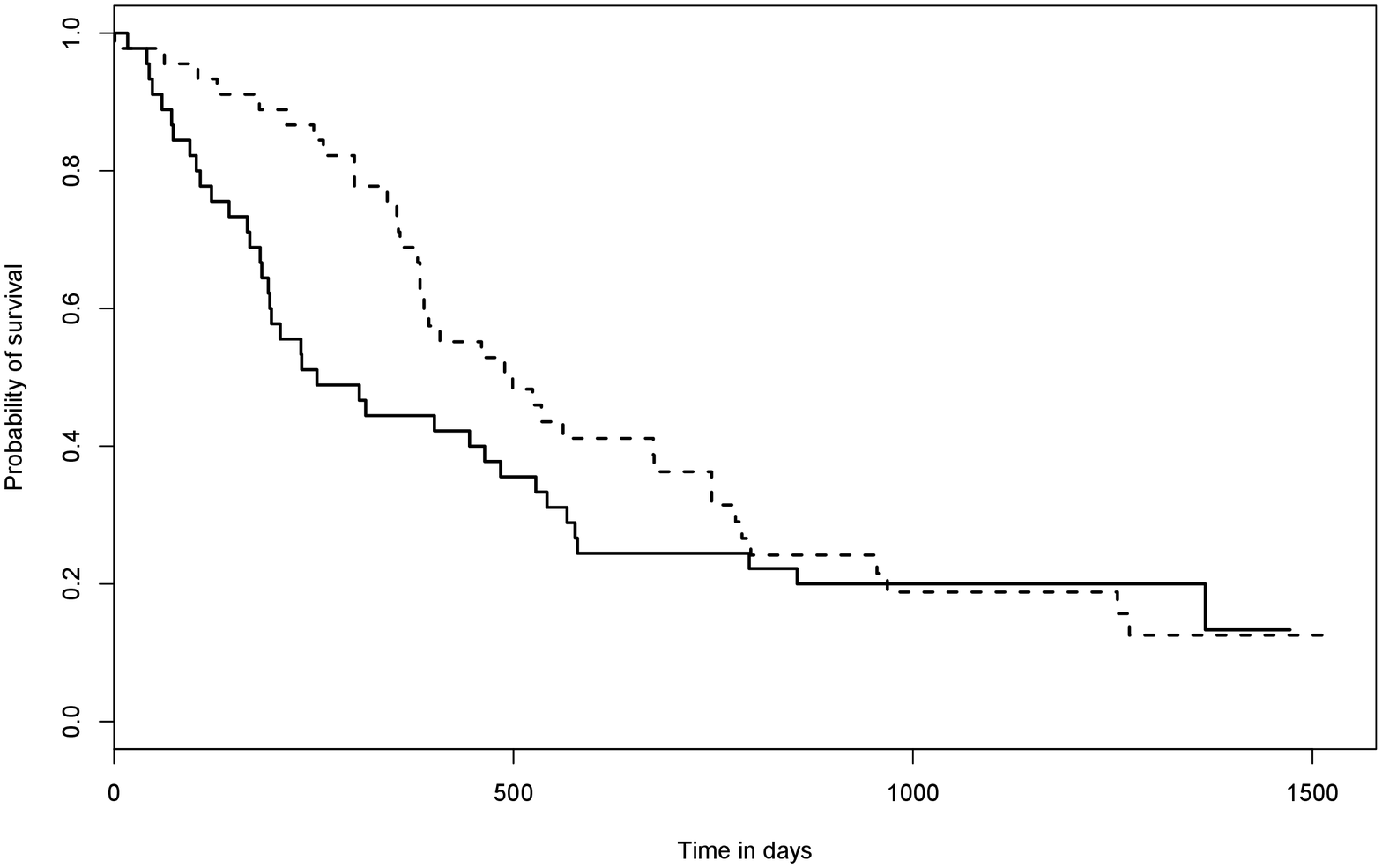}
	\caption{Kaplan--Meier curves for the patients receiving chemotherapy alone (dashed) and those receiving a combination of chemotherapy and radiation (solid).}
	\label{fig:km-data}
\end{figure}

In order to test whether the difference seen between the curves is statistically significant or not, we use our proposed test and its permutation version based on proportional and crossing hazards as well as additionally based on early ($w^{(1, 5)}$) and central hazards. 
After loading the data set in \textsc{R} by \textit{data(GTSG)} the commands \textit{mdir.logrank(GTSG)} and \textit{mdir.logrank(GTSG, cross = TRUE, rg = list(c(0,0), c(1,1), c(1,5)))} do the desired work for the two-direction test (by default) and the four-direction test, respectively. By setting \text{cross=TRUE/FALSE} the user can decide whether the crossing hazard direction $w_{\text{cross}}$ is included and by adding \textit{c(r,g)} to the list \textit{rg} the weight $w^{(r,g)}$, see \eqref{eqn:weights_wrg_wcross}, will be considered in the statistical analysis. By default $10^4$ iterations are used to estimate the permutation quantile. For the users' convenience we implemented a GUI.
We compare the results to the corresponding single-direction tests. The resulting $p$-values are displayed in Table \ref{tab:p-values}. 
As we can see from the table, the single-direction crossing as well as early hazard tests detect significant differences between the two groups at 5\% level, a finding shared by the two- and four-direction tests, while the proportional and the central hazards test do not lead to significant results. This result illustrates the problem when using the classical (single-direction) weighted logrank test since we do not know the right direction in advance. Moreover, the result confirms the advantage of combining different weights and, hence, we advice to use one of our new multiple-direction tests. Similar to the simulation study, we find that the test based on two hazard directions has higher power than the one based on four directions, i.e., the former would still reject the null at 1\% level.

\begin{table}[ht]
	\centering
	\caption{p-values for the single-direction crossing, proportional, early and central hazard tests as well as the multiple-direction tests based on the first two or all four hazard directions in the gastrointestinal cancer study}
	\label{tab:p-values}
	\begin{tabular}{lcccccc}
		&crossing & proportional & early & central & {2 directions} & {4 directions}\\
		permutation&0$\cdot$001  &  0$\cdot$256 &  0$\cdot$005 & 0$\cdot$742 &  0$\cdot$007 & 0$\cdot$017\\ 
		$\chi^2$-approximation &0$\cdot$002  &  0$\cdot$255 &  0$\cdot$005 & 0$\cdot$748 &  0$\cdot$007 & 0$\cdot$018\\
	\end{tabular}
\end{table}

\section{Discussion}\label{sec:discussion}

The main difference between our approach and the one of \citet{BrendelETAL2014} is the additional Assumption \ref{ass:linear_independent}. The linear subset $\mathcal W_m$ of $\mathcal W$ plays an important role, see Theorems \ref{theo:uncond_consis} and \ref{theo:uncon_admissible} as well as the comments to them. Concerning this set it is not an actual restriction to consider only linearly independent weights. As already mentioned, the typical (polynomial) weights fulfill Assumption \ref{ass:linear_independent} if and only if they are linearly independent. Users of our \textsc{R} package \textit{mdir.logrank} do not have to check the linear independence of the weights in advance since we implemented an automatic check. If the pre-chosen weights are linearly dependent then a subset consisting of linearly independent weights will be selected automatically. Consequently, considering additionally Assumption \ref{ass:linear_independent} is not an actual restriction or disadvantage. In fact,  we benefit from this assumption since no additional estimation step for the degree of freedom of the limiting $\chi^2$-distribution under the null is needed. Due to the latter the permutation approach becomes much more computationally efficient. In a similar way the one-sided test of \citet{BrendelETAL2014} for stochastic ordered alternatives $K:\Lambda_1\geq \Lambda_2,\,\Lambda_1\neq \Lambda_2$ may be improved, in particular, concerning computational efficiency. However, due to technical difficulties this is postponed to the future. A further future project is the sample size planning for statistical power of our method.

\section*{Acknowledgement}
The authors thank Markus Pauly for his inspirational suggestions. This work was supported by the \textit{Deutsche Forschungsgemeinschaft}.

\appendix

\section{Proofs}
\subsection{Some notes on \citet{BrendelETAL2014}}
In the subsequent proofs of our theorems we often refer to \citet{BrendelETAL2014}. To avoid a misunderstanding we want to comment on three aspects regarding their results and notation. First, we want to point out that \citet{BrendelETAL2014} interpreted the statistics as certain orthogonal projections. They expressed their test statistic as $||\Pi_{V_r}(\widehat{\gamma}_n)||^2_{\widehat{\mu}_n}$, which equals our $S_n$ according to their Theorem 1. Second, our $w_i$ corresponds to their $\widetilde w_i$ and their $w_i$ in Theorem 9$\cdot$1 equals $w_i\circ F_1$ here. The third aspect concerns the definition of the test statistic. Introduce $m_n=\min\{ \max\{X_{j,i}:i=1,\ldots,n_j\}:j=1,2\}$, the smallest group maximum. Fix $\omega\in \mathcal W$.
\citet{BrendelETAL2014} replaced $w(\widehat F(t-))$ by $w(\widehat F(t-))\mathbf{1}\{t<m_n\}$ $(t\geq 0)$ in the integrands of $T_n(w)$ and $\widehat\Sigma_{r,s}$. Let $T_n^*(w)$ be the corresponding weighted logrank statistic, i.e.,
\begin{align*}
T_n^*(w)= \Bigr(\frac{n}{n_1n_2}\Bigl)^{1/2} \int_{[0,m_n)} w(\widehat F(t-)) \frac{Y_1(t)Y_2(t)}{Y(t)}\Bigl[ \,\mathrm{ d }\widehat A_1(t) -\,\mathrm{ d }\widehat A_2(t)\Bigr].
\end{align*}
All observations lying in $(m_n,\infty)$ belong to the same group, and, hence, the integrand equals $0$ on $(m_n,\infty)$. Consequently, only the set $\{m_n\}$ is excluded from the integration area compared to $T_n(w)$. Since $w$ is bounded we can assume $|w|\leq K\in(0,\infty)$. It is easy to check 
\begin{align*}
|T_n(w)-T_n^*(w)|\leq \Bigr(\frac{n}{n_1n_2}\Bigl)^{1/2} K \Delta N(m_n) \leq K\Bigr(\frac{n}{n_1n_2}\Bigl)^{1/2} \to 0.
\end{align*}
A comparable convergence can be shown for the entries $\widehat\Sigma_{r,s}$ of $\widehat \Sigma$. Finally, the asymptotic results of \citet{BrendelETAL2014} remain valid when we omit the additional indicator  function, as we did in our definitions.

\subsection{Proof of Theorem \ref{theo:uncond_null}}
Considering appropriate subsequences we can assume that $n_1/n\to\eta\in(0,1)$. By Theorem 9$\cdot$1 in the supplement of \citet{BrendelETAL2014} ${T_n}$ converges in distribution to $Z\sim N( 0,\Sigma)$ and $\widehat\Sigma_n$ converges in probability to $\Sigma$, where the entries of $\Sigma$ are 
\begin{align*}
\Sigma_{r,s} =  \int_{[0, \infty)} w_i\circ F_1 w_j \circ F_1\,\psi\;\mathrm{ d }F_1  \quad(1\leq r,s\leq m)
\end{align*}
and $\psi=[(1-G_1)(1-G_2)]/[\eta(1-G_1)+(1-\eta)(1-G_2)]$. Below end we will verify kern$( \Sigma)=\{ 0\}$, i.e., $ \Sigma$ has full rank and is invertible. In this case it is well known that the convergence of the Moore--Penrose inverse follows, i.e., $\widehat\Sigma_n^-\to \Sigma^-$ in probability. By the continuous mapping theorem $S_n$ converges in distribution to a $\chi_m^2$-distributed random variable. Observe that this convergence does not depend on $\eta$ and the subsequence chosen at the proof's beginning.

Let $\beta=(\beta_1,\ldots,\beta_m)^T\in\text{kern}(\Sigma)$. Then
\begin{align*}\label{eqn:conv_null_0=beta...}
0= \beta^T \Sigma \beta = \int_{[0, \infty)} \Bigl( \sum_{i=1}^{m}\beta_i w_i \circ F_1 \Bigr)^2 \psi \,\mathrm{ d }F_1.
\end{align*}
Since $\psi$ is positive on $[0,\tau)$ and $F_1$ as well as $w_1,\ldots,w_m$ are continuous functions we can deduce  $ \sum_{i=1}^{m}\beta_i w_i(x)=0$ for all $x\in[0,F_1(\tau))$. From Assumption \ref{ass:linear_independent}  $\beta_1=\ldots\beta_m=0$ follows.

\subsection{Proof of Theorem \ref{theo:uncond_consis}}
\citet{BrendelETAL2014} showed, see the proof of their Theorem 2, that $S_n \geq T_n(w_i)/\widehat\sigma_n(w_i)$ for all $1\leq i \leq m$. Since consistency of $\phi_{n,\alpha}(w_i)$ implies $\P( T_n(w_i)/\widehat\sigma_n(w_i)> \chi_{1,\alpha}^2)\to 1$ under $K$ for all $\alpha\in(0,1)$ we can deduce that $S_n$ convergences in probability to $\infty$ under the alternative $K$. Finally, the consistency of $\phi_{n,\alpha}$ follows.

\subsection{Proof of Theorem \ref{theo:power_local_altern}}
Following the argumentation of \citet{BrendelETAL2014} for the proof of their Theorem 9$\cdot$1 in the supplement we obtain from Theorem 7$\cdot$4$\cdot$1 of \citet{FlemingHarrington} and the Cram\'{e}r--Wold device that $T_n$ converges in distribution to a multivariate normal distributed $Z\sim N(a,\Sigma)$ and $\widehat{\Sigma}_n \to \Sigma$ in probability. The covariance matrix $\Sigma$ coincides with the one introduced in the proof of Theorem \ref{theo:uncond_null} when replacing $F_1$ by $F_0$. In particular, $\Sigma$ is invertible and (strict) positive definite. By the continuous mapping theorem $S_n$ converges in distribution to a $\chi_m^2(\lambda)$-distributed random variable with noncentrality parameter $\lambda=a^T\Sigma^-a$. 
%
%


\subsection{Proof of Theorem \ref{theo:uncon_admissible}} 
Considering appropriate subsequences we can suppose that $n_1/n\to\eta\in(0,1)$.
Let $Q_{n,\beta}$ $(\beta=(\beta_1,\ldots,\beta_m)^T\in\R^m;\,n\in\N)$ be the common distribution of $(X_{1,1},\delta_{1,1},\ldots,X_{2,n_2},\delta_{2,n_2})$ under the local alternative \eqref{eqn:local_alternative} in direction $w=\sum_{i=1}^m\beta_iw_i$. In particular, $Q_{n,0}$ denotes the corresponding distribution under the null. Let $\psi$ and $\Sigma$ be defined as in Theorem \ref{theo:power_local_altern}.

\begin{lemma}\label{lem:LAN}
	For every $\beta\in\R^m$ the log likelihood ratio can be expressed by
	\begin{align*}
	\log \frac{\mathrm{ d }Q_{n,\beta}}{\mathrm{ d }Q_{n,0}}=\beta^TT_n  - \frac{1}{2}\beta^T\Sigma \beta + R_n,
	\end{align*}
	where $R_n$ converges in $Q_{n,0}$-probability to $0$.
\end{lemma}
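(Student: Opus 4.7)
The plan is to carry out a Le Cam LAN expansion of the censored two-sample likelihood, adapting the approach of \citet{Gill_1980} and the supplementary appendix of \citet{BrendelETAL2014}. Setting $w=w_\beta=\sum_{i=1}^m\beta_i w_i$ and exploiting the fact that the censoring laws $G_j$ are unchanged between $Q_{n,0}$ and $Q_{n,\beta}$, the density ratio factors through the hazard perturbation alone, so that
$$
\log\frac{\mathrm dQ_{n,\beta}}{\mathrm dQ_{n,0}}=\sum_{j=1}^{2}\left[\int\log\!\bigl(1+c_{j,n}\,w\circ F_0\bigr)\,\mathrm dN_j-c_{j,n}\int w\circ F_0\,Y_j\,\mathrm dA_0\right].
$$

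Next, a second-order Taylor expansion $\log(1+x)=x-x^2/2+O(|x|^3)$, together with the uniform boundedness of $w$ on $[0,1]$ and the scaling $c_{j,n}=O(n^{-1/2})$, splits the right-hand side into
$$
\sum_j c_{j,n}\int w\circ F_0\,\mathrm dM_j-\tfrac12\sum_j c_{j,n}^2\int(w\circ F_0)^2\,\mathrm dN_j+o_{Q_{n,0}}(1),
$$
where $M_j=N_j-\int Y_j\,\mathrm dA_0$ is a $Q_{n,0}$-martingale and the cubic remainder is bounded by $c_{j,n}^3\cdot O_P(n)=O_P(n^{-1/2})$. For the quadratic term I would first replace $\mathrm dN_j$ by its compensator $Y_j\,\mathrm dA_0$ (the martingale residual $c_{j,n}^2\int w^2\,\mathrm dM_j$ is of order $O_P(n^{-1/2})$), then invoke the uniform LLN $Y_j/n_j\to(1-G_j)(1-F_0)$ on compact subsets of $[0,\tau)$ to reach a deterministic limit, and finally identify this limit with $\beta^T\Sigma\beta$ via the definition of $\psi$.

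The main obstacle is to rewrite the linear part as $\beta^T T_n+o_{Q_{n,0}}(1)$. Using the representation, valid under $H_=$,
$T_n(w_i)=(n/n_1n_2)^{1/2}\int w_i(\widehat F)[(Y_2/Y)\,\mathrm dM_1-(Y_1/Y)\,\mathrm dM_2]$,
I would proceed in two steps: first upgrade $w_i(\widehat F)$ to $w_i(F_0)$ using uniform consistency of $\widehat F$ on $[0,\tau)$ together with the bounded variation of $w_i$; and then compare the resulting logrank integrals with the raw hazard-score integrals $\sum_j c_{j,n}\int w\circ F_0\,\mathrm dM_j$ through a predictable-covariation calculation, exploiting the scaling identity $c_{j,n}/(n/n_1n_2)^{1/2}=n_{3-j}/n$ and the ABGK-type convergence of the random weights $Y_{3-j}/Y$. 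The key technical point is that, after integration against $M_1,M_2$ and an application of Rebolledo's martingale CLT, the discrepancy between these two linear forms is $Q_{n,0}$-negligible. Combining this with the quadratic convergence to $\beta^T\Sigma\beta$ produces the claimed expansion with $R_n\to 0$ in $Q_{n,0}$-probability.
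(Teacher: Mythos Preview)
Your overall strategy coincides with the paper's: reduce to the single weight $w=\sum_i\beta_iw_i$, obtain the LAN expansion with linear part $Z_n=\sum_j c_{j,n}\int w\circ F_0\,\mathrm dM_j$, and then argue that $Z_n$ may be replaced by $\beta^TT_n=T_n(w)$. The only methodological difference is that you expand the explicit log-likelihood by Taylor's formula, whereas the paper invokes $L_2$-differentiability of the one-parameter censored family $\theta\mapsto Q^*_{\theta,j}$ and cites \citet{Neuhaus2000} and \citet{Janssen1989}, using the Efron--Johnstone operator $R$ to identify the score as $R(L)=w\circ F_0$. Both routes produce the same linear term $Z_n$.

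There is, however, a genuine gap precisely at what you call ``the key technical point'', and the paper's own proof shares it. When $G_1\neq G_2$ the random weights $Y_{3-j}/Y$ do \emph{not} converge to $n_{3-j}/n$; one has instead $Y_2/Y\to(1-\eta)(1-G_2)/[\eta(1-G_1)+(1-\eta)(1-G_2)]$, which equals $1-\eta$ only if $G_1=G_2$. A predictable-covariation computation gives
\[
\lim_n\text{var}\bigl(T_n(w)-Z_n\bigr)=\eta(1-\eta)\int (w\circ F_0)^2\,\frac{(G_1-G_2)^2}{\eta(1-G_1)+(1-\eta)(1-G_2)}\,\mathrm dF_0,
\]
which is strictly positive whenever $G_1\neq G_2$ on a set of positive $F_0$-measure; hence the discrepancy between the two linear forms is \emph{not} $Q_{n,0}$-negligible. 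For the same reason your quadratic term converges to $\int(w\circ F_0)^2[(1-\eta)(1-G_1)+\eta(1-G_2)]\,\mathrm dF_0$ rather than to $\beta^T\Sigma\beta=\int(w\circ F_0)^2\psi\,\mathrm dF_0$, so the identification ``via the definition of $\psi$'' fails. In the equal-censoring case $G_1=G_2$ both discrepancies vanish and your argument (and the paper's) goes through verbatim; for unequal censoring the LAN expansion is still correct with central sequence $Z_n$ and information $\Sigma^*_{rs}=\int w_r\circ F_0\,w_s\circ F_0\,[(1-\eta)(1-G_1)+\eta(1-G_2)]\,\mathrm dF_0$, which is all that is needed for contiguity, but the stated identity with $\beta^TT_n$ and $\beta^T\Sigma\beta$ does not hold as written.
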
 
\begin{proof}
	Fix $\beta=(\beta_1,\ldots,\beta_m)^T\in\R^m$ and let $w=\sum_{i=1}^m\beta_iw_i$. Let $\{P_{\theta}^*:\theta\in\Theta\}$, $\Theta=(-\theta_0,\theta_0)\subset \R$, be a parametrized family  with cumulative hazard measures $A_{\theta}^*$ given by
	\begin{align*}
	A_\theta^*(t)=\int_{[0,t]} 1+ \theta w\circ F_0 \,\mathrm{ d } A_0\quad (\theta\in\Theta,\ t\geq 0),
	\end{align*}
	where $\theta_0>0$ is chosen such that the integrand is always positive. Plugging in $\theta=c_{j,n}$ gives us $A_{j,n}$ from \eqref{eqn:local_alternative} $(j=1,2)$. Let $Q_{\theta,j}^*$ ($j=1,2$; $\theta\in\Theta$) be the distribution of $(\min(T,C),\mathbf{1}\{T\leq C\})$ for independent $T\sim P_\theta^*$ and $C\sim G_j$. Obviously, $Q_{n,\beta}= (Q_{c_{1,n},1}^*)^{n_1}\otimes (Q_{c_{2,n},2}^*)^{n_2}$. As already stated by \citet{BrendelETAL2014}, see the top of their page 6, the family $\theta\mapsto Q_{\theta,j}^*$ is $L_2$-differentiable with derivative $L$, say.  Let $M_j=N_j-\int Y_j \,\mathrm{ d } A_0$ $(j=1,2)$. Following the argumentation of \citet{Neuhaus2000}, see also \citet{Janssen1989}, we obtain 
	\begin{align*}
	\log \frac{\mathrm{ d }Q_{n,\beta}}{\mathrm{ d }Q_{n,0}}= Z_n - \frac{1}{2}\sigma^2  + R_n^*,\quad Z_n=\int R(L) \Bigl( \frac{ \mathrm{ d M_1}}{n_1}-\frac{ \mathrm{ d M_2}}{n_2} \Bigr),
	\end{align*}
	where $R_n^ *$ converges in $Q_{n,0}$-probability to $0$, $Z_n$ converges in distribution to $Z\sim N(0,\sigma^2)$ for some  $\sigma\geq 0$ under $Q_{n,0}$ and $R$ is the operator studied by \citet{EfronJohnsrone1990} and \citet{RitovWellner1988}. In our situation $R(L)=w\circ F_0$. It is easy to check that $T_n(w)$ coincides with $Z_{n}$ when we replace $w\circ F_0$ and $A_0$ by $t\mapsto w\circ \widehat F(t-)$ and $\widehat A$, respectively. Using the standard counting process techniques, for example Theorem 4$\cdot$2$\cdot$1 of \citet{Gill_1980}, we can conclude that $T_n(w)-Z_n$ converges in $Q_{n,0}$-probability to $0$. Hence,
	\begin{align*}
	\log \frac{\mathrm{ d }Q_{n,\beta}}{\mathrm{ d }Q_{n,0}}= T_n(w) - \frac{1}{2}\sigma^2  + R_n,
	\end{align*}
	where $R_n$ tends in $Q_{n,0}$-probability to $0$ and $T_n(w)$ converges in distribution to $Z\sim N(0,\sigma^2)$ under $Q_{n,0}$. From the proof of Theorem \ref{theo:uncond_null}, setting $m=1$ and $w_1=w$ there, we get $\sigma^2=\int (w\circ F_0)^2\psi \,\mathrm{ d } F_0$. Finally, observe that $T_n(w)=\beta^TT_n$ and $\sigma^2=\beta^T\Sigma\beta$.
\end{proof}
Recall from the proof of Theorem \ref{theo:power_local_altern} that $T_n$ converges in distribution to $Z\sim N(\Sigma\beta,\Sigma)=Q_\beta$ under $Q_{n,\beta}$ for all $\beta\in\R^m$ and that $\Sigma$ is invertible. Combining these and Lemma \ref{lem:LAN} yields that $\mathrm{ d }Q_{n,\beta}/\mathrm{ d }Q_{n,0}$ converges in distribution under $Q_{n,0}$ to $\mathrm{ d }Q_{\beta}/\mathrm{ d }Q_{0}(Z)$ with $Z\sim Q_0$. In terms of statistical experiments, see Sections 60 and 80 of \citet{Strasser1985}, the experiment sequence $\{Q_{n,\beta}:\beta\in\R^m\}$ fulfills Le Cam's local asymptotic normality, in short LAN, and converges weakly to the Gaussian shift model $\{Q_\beta:\beta\in\R^m\}$. 
\begin{remark}\label{rem:contiguity}
	By Le Cam's first lemma, see Theorem 61$\cdot$3 of \citet{Strasser1985}, $Q_{n,\beta}$ and $Q_{n,0}$ are mutually contiguous for all $\beta\in\R^m$, i.e., convergence in $Q_{n,0}$-probability implies convergence in $Q_{n,\beta}$-probability, and vice versa. 
\end{remark}

From the distributional convergence of $T_n$ mentioned above, we obtain for all $\beta\in\R_m$
\begin{align*}
\E_{n,\beta}(\phi_{n,\alpha})=\int \mathbf{1}\{ x^T\Sigma_n^-x>\chi_{m,\alpha}^2\} \,\mathrm{ d }Q_{n,\beta}^{T_n}(x)\to \int \mathbf{1}\{ x^T\Sigma^-x>\chi_{m,\alpha}^2\} \,\mathrm{ d }Q_\beta(x),
\end{align*}
where $Q_{n,\beta}^{T_n}$ is the image measure of $Q_{n,\beta}$ under the map $T_n$. Since $x\mapsto x^T\Sigma^-x$ is convex we can deduce from Stein's Theorem, see Theorem 5$\cdot$6$\cdot$5 of \citet{Anderson2003}, that $x\mapsto \phi_\alpha^*(x)=\mathbf{1}\{ x^T\Sigma^-x>\chi_{m,\alpha}^2\}$ $(x\in\R^m)$ is an admissible test in the Gaussian shift model $\{Q_{\beta}:\beta\in\R^m\}$ for testing the null $H:\beta=0$ versus the alternative $K:\beta\neq 0$. This means that there is no test $ \varphi$ of size $\alpha$ such that $\int \varphi - \phi^*_\alpha \,\mathrm{ d }Q_\beta$ is nonnegative for all $\beta\neq 0$ and positive for at least one $\beta$. Now, suppose contrary to the claim of Theorem \ref{theo:uncon_admissible} that there is a test sequence $\varphi_n$ ($n\in\N$) with the mentioned properties. By Theorem 62$\cdot$3 of \citet{Strasser1985}, which goes back to Le Cam, there is a test $\varphi$  for the limiting model $\{Q_{\beta}:\beta\in\R^m\}$ such that along an appropriate subsequence $\E_{n,\beta}(\varphi_n)\to \int \varphi \,\mathrm{ d }Q_\beta$ for all $\beta\in\R^m$. Under our contradiction assumption we obtain $ \int \varphi \,\mathrm{ d }Q_0\leq \alpha$, $\int \varphi \,\mathrm{ d } Q_\beta\geq \int \phi^*_\alpha\,\mathrm{ d }Q_\beta$ for all $0\neq\beta\in\R^m$ and $\int \varphi \,\mathrm{ d } Q_\beta> \int \phi^*_\alpha\,\mathrm{ d } Q_\beta$ for at least one $\beta\neq 0$. But, clearly, this contradicts the admissibility of $\phi^*_\alpha$.

\subsection{Proof of Theorem \ref{theo:perm}}
As we explain at the proof's end all statements follow from the subsequent lemma.
\begin{lemma}\label{lem:perm_asym_equi}
	Let $F_1,F_2,G_1,G_2$ be fixed and independent of $n$. Suppose that $S_n(c^{(n)},\delta^{(n)})$ converges in distribution to a random variable $Z$ on $[0,\infty]$. Moreover, assume that the distribution function $t\mapsto \P(Z\leq t)$ $(t\in[0,\infty])$ of $Z$ is continuous on $[0,\infty)$. Then the unconditional test $\phi_{n,\alpha}$ and the permutation test $\phi_{n,\alpha}^*$ are asymptotically equivalent, i.e., $\E(\vert \phi_{n,\alpha}-\phi_{n,\alpha}^*\vert)\to 0$ for all $\alpha\in(0,1)$.
\end{lemma}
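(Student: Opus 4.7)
The plan is to reduce Lemma \ref{lem:perm_asym_equi} to two ingredients: (i) a conditional permutation CLT stating that $S_n(c_n^\pi,\delta^{(n)})$ is asymptotically $\chi_m^2$-distributed given the data, and (ii) the hypothesis $S_n\Rightarrow Z$ with $\P(Z\leq \cdot)$ continuous at $\chi_{m,\alpha}^2$. Since the two tests differ only through their critical values, it suffices afterwards to show the permutation quantile $c^*_{n,\alpha}(\delta^{(n)})$ converges in probability to the deterministic value $\chi_{m,\alpha}^2$, after which a Slutsky-style bound closes the argument.

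For the conditional permutation CLT I would show that, conditionally on the pooled data $(X_{(k)},\delta_{(k)})_{k\leq n}$, the vector $T_n(c_n^\pi,\delta^{(n)})$ converges weakly in probability to $N(0,\Sigma)$ with $\Sigma$ as in Theorem \ref{theo:uncond_null}, while $\widehat\Sigma_n(c_n^\pi,\delta^{(n)})$ converges in probability to the same $\Sigma$. The distributional part follows by writing each $T_n(w_i;c_n^\pi,\delta^{(n)})$ as a weighted two-sample permutation sum $\sum_{k=1}^n g_{i,k,n}(\mathbf{1}\{c_{(k)}^\pi = c_{1,n}\}-n_1/n)$ with data-measurable scores and applying H\'ajek's combinatorial CLT together with the Cram\'er--Wold device. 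Convergence of the permuted $\widehat\Sigma_n$ rests on the observation that the only group-dependent factor is $Y_1Y_2/Y$, whose conditional expectation under the permutation concentrates around the $\psi$-weighted limit entering $\Sigma$ by a law-of-large-numbers argument, while the remaining pieces involve only the permutation-invariant pooled Nelson--Aalen and Kaplan--Meier estimators. Under Assumption \ref{ass:linear_independent}, $\Sigma$ is invertible (as proved for Theorem \ref{theo:uncond_null}), so the continuous mapping theorem yields conditional weak convergence of $S_n(c_n^\pi,\delta^{(n)})$ to $\chi_m^2$ in probability. This step is the permutation analogue of Theorem \ref{theo:uncond_null} and close in spirit to the argument in \citet{BrendelETAL2014}; it is also the main obstacle, because when $G_1\neq G_2$ the data are not exchangeable and the permutation law no longer equals the data law, so the convergence cannot simply be read off from the unconditional CLT.

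Since the $\chi_m^2$-CDF is continuous and strictly increasing at $\chi_{m,\alpha}^2$, a standard quantile stability argument upgrades the previous step to $c^*_{n,\alpha}(\delta^{(n)})\to \chi_{m,\alpha}^2$ in probability. On the event $\{|c^*_{n,\alpha}(\delta^{(n)})-\chi_{m,\alpha}^2|\leq \varepsilon\}$ the two tests disagree only when $|S_n-\chi_{m,\alpha}^2|\leq \varepsilon$, hence for every $\varepsilon>0$
\begin{align*}
\E|\phi_{n,\alpha}-\phi_{n,\alpha}^*|\leq \P\bigl(|S_n-\chi_{m,\alpha}^2|\leq \varepsilon\bigr) + \P\bigl(|c^*_{n,\alpha}(\delta^{(n)})-\chi_{m,\alpha}^2|>\varepsilon\bigr).
\end{align*}
The second term vanishes as $n\to\infty$ by the quantile convergence. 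For the first, Portmanteau combined with continuity of $\P(Z\leq \cdot)$ on $[0,\infty)$ gives $\limsup_n \P(|S_n-\chi_{m,\alpha}^2|\leq \varepsilon)\leq \P(|Z-\chi_{m,\alpha}^2|\leq \varepsilon)$, which tends to $0$ as $\varepsilon\downarrow 0$. Taking $\varepsilon\to 0$ along a suitable subsequence yields $\E|\phi_{n,\alpha}-\phi_{n,\alpha}^*|\to 0$, as claimed.
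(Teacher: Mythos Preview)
Your overall architecture coincides with the paper's: establish that the conditional permutation law of $S_n(c_n^\pi,\delta^{(n)})$ converges to $\chi_m^2$, upgrade this to convergence of the permutation quantile, and then compare with $\phi_{n,\alpha}$ via a Slutsky-type bound (the paper compresses the last two steps into an appeal to Lemma~1 of \citet{JanssenPauls2003}).

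The gap is in step~(i), the identification of the limiting covariance. You assert that, conditionally on the pooled data, $T_n(c_n^\pi,\delta^{(n)})\Rightarrow N(0,\Sigma)$ and $\widehat\Sigma_n(c_n^\pi,\delta^{(n)})\to\Sigma$ with $\Sigma$ as in Theorem~\ref{theo:uncond_null}. But Lemma~\ref{lem:perm_asym_equi} is stated for \emph{arbitrary} fixed $F_1,F_2,G_1,G_2$, and in that generality the permutation limit is \emph{not} the null $\Sigma$. Under permutation the group labels are reassigned to the pooled order statistics, so the limit objects are built from the pooled quantities $H$, $H^1$ and $F^*$ rather than from $F_1$ and $\psi$; the paper accordingly obtains a matrix $B$ with entries $B_{r,s}=\int w_r(F^*)w_s(F^*)\,\mathrm{d}H^1$, and the conditional limit of $S_n(c_n^\pi,\delta^{(n)})$ is $\chi^2_{\mathrm{rank}(B)}$. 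Even under $H_=$ one has $B\neq\Sigma$ whenever $G_1\neq G_2$, so your heuristic that the permuted $Y_1Y_2/Y$ ``concentrates around the $\psi$-weighted limit entering $\Sigma$'' is not right.

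Consequently, invoking ``$\Sigma$ is invertible (as proved for Theorem~\ref{theo:uncond_null})'' does not deliver the degree of freedom $m$. What is actually needed---and what the paper supplies---is a separate argument that $\mathrm{rank}(B)=m$: one shows $F^*(\tau)>0$ from the standing hypothesis $F_1(\tau)>0$ or $F_2(\tau)>0$, observes that $H^1$ and $F^*$ share their increase points, and then uses Assumption~\ref{ass:linear_independent} on $[0,F^*(\infty)]$ to force any $\beta\in\ker(B)$ to vanish. Once this is in place, your quantile-plus-Portmanteau conclusion goes through unchanged.
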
 
\begin{proof}
	Considering appropriate subsequences we can suppose $n_1/n\to\eta\in(0,1)$. By Lemma 1 of \citet{JanssenPauls2003}  it is sufficient to verify 
	\begin{align*}
	\sup_{x\geq 0} \bigl|\ \P( S_n(c^{(n)}, \delta^{(n)})\leq x \mid \delta^{(n)} ) - \chi^2_m([0,x])\ \bigr| \to 0 
	\end{align*} 
	in probability. Recall that $\xi_n\to \xi$ in probability if and only if every subsequence has a subsequence such that along this sub-subsequence $\xi_n$ converges to $\xi$ with probability one. Define  
	\begin{align*}
	& H(x)=1-\eta[1-F_1(x)][1-G_1(x)]-[1-\eta][1-F_2(x)][1-G_2(x)]\quad(x\geq 0),\\
	& H^1(x)=\eta\int_{[0, x]}(1-G_1)\,\mathrm{ d }F_1+(1-\eta)\int_{[0,x]}(1-G_2)\,\mathrm{ d }F_2\quad(x\geq 0), \\
	& F^*(x)=1-\exp\Bigl( -\eta \int_{[0,x]} \frac{1-G_1}{1-H} \,\mathrm{ d }F_1 -(1-\eta)\int_{[0,x]} \frac{1-G_2}{1-H} \,\mathrm{ d }F_2\Bigr)\quad(x\geq 0).
	\end{align*}
	Let B be an $m\times m$-matrix with entries ${B}_{r,s}= \int w_r(F^*) w_s(F^*)\,\mathrm{ d }H^1$ $(1\leq r,s\leq m)$. Following the proof's argumentation of \citet{BrendelETAL2014} for their Theorem 4, in particular using their Lemmas 10$\cdot$1 and 10$\cdot$2, we can deduce: for every subsequence there is a subsequence such that along this sub-subsequence $S_n(c_n^\pi,\delta^{(n)}(\omega))$ converges in distribution to $\widetilde Z\sim \chi^2_{\text{rank}{(B)}}$ for almost all $\omega$, i.e., for all $\omega\in E$ with $\P(E)=1$.  Consequently, it remains to show $\text{rank}(B)=m$, or equivalently $\text{kern}(B)=\{0\}$. 
	
	Let $\beta=(\beta_1,\ldots,\beta_m)^T\in\text{kern}(B)$. First, observe that for every $0<x<y$ we have $F^*(y)-F^*(x)>0$ if and only if $H^1(y)-H^1(x)>0$. Thus, we obtain from 
	$0 = \beta^TB\beta=\int ( \sum_{i=1}^{m}\beta_iw_i(F^*) )^2\,\mathrm{ d }H^1$ and the continuity of $F^*$ as well as of $w_1,\ldots,w_m$ that $\sum_{i=1}^{m}\beta_iw_i(x)=0$ for all $x\in[0,F^*(\infty)]$, where $F^*(\infty)=\lim_{u\to\infty}F^*(u)$. Since $F_1(\tau)>0$ or $F_2(\tau)>0$ we can conclude $F^*(\infty)\geq F^*(\tau)>0$ and, hence,  $\beta= 0$ follows from the linear independence of $w_1,\ldots,w_m$ on $[0,F^*(\infty)]$.
\end{proof} 
First, suppose that $\phi_{n,\alpha}$ is consistent for a fixed alternative $K$, i.e., $\E(\phi_{n,\alpha})\to 1$ for all $\alpha\in(0,1)$. Then $S_n$ converges to $Z\equiv \infty$ in probability under $K$. Applying Lemma \ref{lem:perm_asym_equi} yields that $\phi_{n,\alpha}^*$ is consistent for $K$ as well.
From Theorem \ref{theo:uncond_null} and Lemma \ref{lem:perm_asym_equi} we can conclude that $\phi_{n,\alpha}^*$ is asymptotically exact. To be more specific, we obtain $E_{n,0}(|\phi_{n,\alpha}-\phi_{n,\alpha}^*|)\to 0$ for all $\alpha\in(0,1)$. From Remark \ref{rem:contiguity}, setting $m=1$ and $w_1=w$ there, we get $E_{n,w}(|\phi_{n,\alpha}-\phi_{n,\alpha}^*|)\to 0$ for all $\alpha\in(0,1)$ and every $w\in\mathcal W$. Combining this and Theorem \ref{theo:uncon_admissible} proves the last statement of Theorem \ref{theo:perm}, the admissibility of $\phi_{n,\alpha}^*$.

\end{document}